\title{The Axiom of Multiple Choice and Models for Constructive Set Theory}
\author{Benno van den Berg\footnote{ILLC, Universiteit van Amsterdam, P.O. Box 94242, 1090 GE Amsterdam. E-mail: B.vandenBerg3@uva.nl. The first author was supported by the Netherlands Organisation for Scientific Research while working on the research reported here.} \, \& \, Ieke Moerdijk\footnote{Radboud Universiteit Nijmegen, Institute for Mathematics, Astrophysics, and
Particle Physics, Heyendaalseweg 135, 6525 AJ Nijmegen, the Netherlands.
E-mail: i.moerdijk@math.ru.nl.} }
\date{26 September, 2013}
\def\epi{\ensuremath{\twoheadrightarrow}}
\begin{document}

\maketitle

\begin{abstract}
\noindent
We propose an extension of Aczel's constructive set theory {\bf CZF} by an axiom for inductive types and a choice principle, and show that this extension has the following properties: it is interpretable in Martin-L\"of's type theory (hence acceptable from a constructive and generalised-predicative standpoint). In addition, it is strong enough to prove the Set Compactness Theorem and the results in formal topology which make use of this theorem. Moreover, it is stable under the standard constructions from algebraic set theory, namely exact completion, realizability models, forcing as well as more general sheaf extensions. As a result, methods from our earlier work can be applied to show that this extension satisfies various derived rules, such as a derived compactness rule for Cantor space and a derived continuity rule for Baire space. Finally, we show that this extension is robust in the sense that it is also reflected by the model constructions from algebraic set theory just mentioned.
\end{abstract}

\section{Introduction}

There is a distinctive stance in the philosophy of mathematics which is usually called ``generalised predicativity''. It is characterised by the fact that it does not accept non-constructive and impredicative arguments, but it does allow for the existence of a wide variety of inductively defined sets. Martin-L\"of's type theory \cite{martinlof84} expresses this stance in its purest form. For the development of mathematics, however, this system has certain drawbacks: the type-theoretic formalism is involved and requires considerable time to get accustomed to, and the lack of extensionality leads to difficult conceptual problems. Aczel's interpretation of his constructive set theory {\bf CZF} in Martin-L\"of's type theory \cite{aczel78} overcomes both problems: the language of set theory is known to any mathematician and {\bf CZF} incorporates the axiom of extensionality. For this reason, {\bf CZF} has become the standard reference for a set-theoretic system expressing the ``generalised-predicative stance''.

It turns out, however, that {\bf CZF} is not quite strong enough to formalise all the mathematics which one would like to be able to formalise in it: there are results, in particular in formal topology, which can be proved in type theory and are perfectly acceptable from a generalised-predicative perspective, but which go beyond {\bf CZF}. There seem to be essentially two reasons for this: first of all, type theory incorporates the ``type-theoretic axiom of choice'' and secondly, Martin-L\"of type theory usually includes W-types which allow one to prove the existence of more inductively defined sets than can be justified in {\bf CZF} alone. To address this, we will suggest in this paper an extension of ${\bf CZF}$ which includes a form of choice and W-types, so that in it one can develop formal topology, while at the same time having good model-theoretic properties.

Let us take the second point first. Already in 1986, Peter Aczel suggested what he called the Regular Extension Axiom {\bf (REA)} to address this issue \cite{aczel86}. The main application of {\bf (REA)} is that it allows one to prove the ``Set Compactness Theorem'', which is important in formal topology (see \cite{aczel06, bergmoerdijk10c}), but not provable in {\bf CZF} proper. Here we suggest to take the axiom {\bf (WS)} instead: for every function $f: B \to A$ the associated W-type $W(f)$ is a set. (This is not the place to review the basics of W-types, something which we have already done on several occasions: see, for example, \cite{bergmoerdijk10b}.) One advantage of this axiom over {\bf (REA)} is that it directly mirrors the type theory. In addition, {\bf (WS)} is easy to formulate in the categorical framework of algebraic set theory, so that one may use this extensive machinery to establish its basic preservation properties (such as stability under exact completion, realizability and sheaves), whereas for {\bf (REA)} such a formulation does not seem to be possible. It has been claimed, quite plausibly, that {\bf (REA)} has similar stability properties, but we have never seen a proof of this claim.

As for the lack of choice in {\bf CZF}, the axiom which would most directly mirror the type theory would be the ``presentation axiom'', which says that the category of sets has enough projectives. The problem with this axiom, however, is that is not stable under taking sheaves. Precisely for this reason, Erik Palmgren together with the second author introduced in \cite{moerdijkpalmgren02} an axiom called the Axiom of Multiple Choice {\bf (AMC)}, which is implied by the existence of enough projectives and is stable under sheaves. This axiom (which we will discuss towards the end of this paper) is a bit involved and it turns out that on almost all occasions where one would like to use this axiom a slightly weaker and simpler principle suffices. This weaker principle is:
\begin{quote}
For any set $X$ there is a \emph{set} $\{ p_i: Y_i \twoheadrightarrow X \, : \, i \in I \}$
of surjections onto $X$ such that for any surjection $p: Y \twoheadrightarrow X$
onto $X$ there is an $i \in I$ and a function $f: Y_i \to Y$ such that $p \circ f =
p_i$.
\end{quote}
It is \emph{this} axiom which we will call {\bf (AMC)} in this paper, whereas we will refer to the original formulation in \cite{moerdijkpalmgren02} as ``strong {\bf (AMC)}''. (Independently from us, Thomas Streicher hit upon the same principle in \cite{streicher05}, where it was called TTCA$_f$; on the nLab, http://ncatlab.org, the principle is called WISC.)

To explain the name ``Axiom of Multiple Choice'', we remark that for a surjection $p: Y \to X$, a \emph{choice function} is a section $f: X \to Y$ of $p$. On the other hand, a \emph{multi-valued choice function} is a function $f: Y'\to Y$, defined on a cover $p': Y'\to X$ of $X$, for which $p f = p'$. Our axiom provides a (necessarily nonempty) family of domains sufficient to find such a multi-valued choice function for any surjection $p: Y \to X$.

So this is our proposal: extend the theory {\bf CZF} with the combination of {\bf (WS)} and {\bf (AMC)}. The resulting theory has the following properties:
\begin{enumerate}
\item It is validated by Aczel's interpretation in Martin-L\"of's type theory (with one universe closed under W-types) and therefore acceptable from a gene\-ra\-lised-predicative perspective.
\item The theory is strong enough to prove the Set Compactness Theorem and to develop that part of formal topology which relies on this result.
\item The theory is stable under the key constructions from algebraic set theory, such as exact completion, realizability and sheaves.
\end{enumerate}
It is the purpose of this paper to prove these facts. As a result, {\bf CZF} + {\bf (WS)} + {\bf (AMC)} will be the first (and so far only) theory for which the combination of these properties has been proved. And as a consequence of stability, the methods from \cite{bergmoerdijk10c} are applicable to it and one can show:
\begin{enumerate}
 \item[4.] The theory satisfies various derived rules, such as the derived Fan Rule and the derived Bar Induction Rule.
\end{enumerate}

Moreover, we will show that the theory has a certain robustness about it. Indeed, assuming that the ground model for {\bf CZF} satisfies {\bf (AMC)} and {\bf (WS)}, it is impossible to use the standard model-theoretic techniques to prove independence of {\bf (AMC)} and {\bf (WS)} from {\bf CZF}. To express this more formally, let us say that an axiom $\varphi$ in the language of ${\bf CZF}$ is \emph{reflected} by sheaf extensions (for example), if for any {\bf CZF}-model \ct{E}, the axiom $\varphi$ holds in \ct{E} as soon as it holds in some sheaf extension $\ct{E}'$ of \ct{E}. Then as a fourth property of our theory {\bf CZF} + {\bf (WS)} + {\bf (AMC)} we have
\begin{enumerate}
\item[5.] The theory is reflected by the model constructions of exact completion, realizability and sheaves.
\end{enumerate}

It should be noted that establishing the first property for {\bf CZF} + {\bf (WS)} + {\bf (AMC)} is quite easy, because stronger axioms are verified by the type-theoretic interpretation: {\bf (REA)} can be interpreted (that was the main result of \cite{aczel86}) and {\bf (REA)} implies {\bf (WS)} (see  \cite[page 5--4]{aczelrathjen01}), while {\bf (AMC)} is an obvious consequence of the presentation axiom which is validated by the type-theoretic interpretation (see \cite{aczel82}). Therefore it remains to establish the other properties in the list.

The contents of this paper are therefore as follows. First, we will show in Section 2 that the Set Compactness Theorem follows from the combination of {\bf (WS)} and {\bf (AMC)}. Then we will proceed to show that these axioms are stable under and reflected by exact completion (Section 3), realizability (Section 4) and sheaves (Section 5). Throughout these sections we assume familiarity with the framework for algebraic set theory developed in \cite{bergmoerdijk07a, bergmoerdijk08, bergmoerdijk09, bergmoerdijk10b}. Finally, in Section 6 we will discuss the relation of our present version of {\bf (AMC)} with the earlier and stronger formulation from \cite{moerdijkpalmgren02, rathjen06b} and with Aczel's Regular Extension Axiom.

\section{The Set Compactness Theorem}

The purpose of this section is to prove that, in {\bf CZF}, the combination of {\bf (WS)} and {\bf (AMC)} implies the Set Compactness Theorem. To state this Set Compactness Theorem, we need to review the basics of the theory of inductive definitions in {\bf CZF}, which will be our metatheory in this section.

\begin{defi}{notation}
If $X$ is a class, we will denote by ${\rm Pow}(X)$ the class of subsets of $X$ and if $X$ is a set, we will denote by ${\rm Surj}(X)$ the class of surjections onto $X$.
\end{defi}

\begin{defi}{inductivedef}
Let $S$ be a set. An \emph{inductive definition} on $S$ is a subset $\Phi$ of ${\rm Pow}(S) \times S$. If $\Phi$ is an inductive definition, then a subclass $A$ of $S$ is \emph{$\Phi$-closed}, if
\[ X \subseteq A \Rightarrow a \in A \]
whenever $(X, a)$ is in $\Phi$.
\end{defi}

Within {\bf CZF} one can prove that for every subclass $U$ of $S$ there is a least $\Phi$-closed subclass of $S$ containing $U$ (see \cite{aczelrathjen01}); it is denoted by $I(\Phi, U)$. The Set Compactness Theorem is the combination of the following two statements:
\begin{enumerate}
\item $I(\Phi, U)$ is a set whenever $U$ is.
\item There is a set $B$ of subsets of $S$ such that for each class $U \subseteq
S$ and each $a \in I(\Phi, U)$ there is a set $V \in B$ such that $V \subseteq U$
and $a \in I(\Phi, V)$.
\end{enumerate}
As said, the Set Compactness Theorem is not provable in {\bf CZF} proper, but we will show in this section that it becomes provable when we extend {\bf CZF} with {\bf (WS)} and {\bf (AMC)}.

To prove the result it will be convenient to introduce the notion of a \emph{collection square}. In the definition we write for any function $f: B \to A$ and each $a \in A$,
\[ B_a = f^{-1}(a) = \{ b \in B \, : \, f(b) = a \}, \]
as is customary in categorical logic.

\begin{defi}{collectionsquareinsets}
A commuting square in the category of sets
\diag{ D \ar[r]^{q} \ar[d]_g & B \ar[d]^f \\
C \ar[r]_{p} & A }
will be called a \emph{collection square}, if
\begin{enumerate}
\item the inscribed map $D \to B \times_A C$ is a surjection (meaning that for each pair of elements $b \in B, c \in C$ with $f(b) = p(c)$ there is at least one $d \in D$ with $q(d) = b$ and $g(d) = c$),
\item and for each $a \in A$ and each surjection $e: E \epi B_a$
there is a $c \in p^{-1}(a)$ and a map $h: D_c \to E$ such that the triangle
\diag{ & E \ar@{->>}[dr]^e & \\
D_c \ar[ur]^h \ar@{->>}[rr]_{q \upharpoonright D_c} & & B_a }
commutes.
\end{enumerate}
Note that the last condition implies in particular that $p$ must be a surjection.
\end{defi}

Observe that ${\bf (AMC)}$ can be rephrased as: any map $f: X \to 1$ fits into a collection square
\diag{ Y \ar[r]^q \ar[d]_g & X \ar[d]^f \\
I \ar[r]_p & 1. }
In fact, {\bf (AMC)} implies that this property holds for any map.
\begin{prop}{AMCimpliesenoughcollsquares}
{\bf (AMC)} implies that any function \func{f}{B}{A} fits into a collection square
\diag{ D \ar[r]^q \ar[d]_g & B \ar[d]^f \\
C \ar[r]_p & A. }
\end{prop}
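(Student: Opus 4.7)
My plan is to apply \textbf{(AMC)} fiberwise over $A$ and assemble the resulting data into a single square. Concretely, for each $a \in A$ the fiber $B_a = f^{-1}(a)$ is a set, so \textbf{(AMC)} yields an index set $I_a$ together with a family of surjections $\{p^a_i : Y^a_i \epi B_a \, : \, i \in I_a\}$ such that every surjection onto $B_a$ factors (up to postcomposition) through some $p^a_i$. My first real step is therefore to use Strong Collection in \textbf{CZF} to combine these choices into a single $A$-indexed family; that is, to produce a set $\{(a, I_a, \langle Y^a_i, p^a_i \rangle_{i \in I_a}) : a \in A\}$.

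Once that family is at hand, I would define
\[ C = \{ (a,i) \, : \, a \in A, \, i \in I_a \}, \qquad D = \{ (a,i,y) \, : \, a \in A, \, i \in I_a, \, y \in Y^a_i \}, \]
with $p : C \to A$ the first projection, $g : D \to C$ the projection onto the first two coordinates, and $q : D \to B$ given by $q(a,i,y) = p^a_i(y)$. Commutativity $fq = pg$ is immediate because $p^a_i(y) \in B_a$.

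It then remains to check the two clauses of Definition~\ref{collectionsquareinsets}. For the first, given $(b,(a,i)) \in B \times_A C$ one has $b \in B_a$, and since $p^a_i$ is surjective there is some $y \in Y^a_i$ with $p^a_i(y) = b$, whence $(a,i,y) \in D$ maps to $(b,(a,i))$. For the second, fix $a \in A$ and a surjection $e : E \epi B_a$; by the defining property of the family $\{p^a_i\}_{i \in I_a}$ there exist $i \in I_a$ and $h : Y^a_i \to E$ with $e \circ h = p^a_i$. Take $c = (a,i) \in p^{-1}(a)$; then $D_c$ is in bijection with $Y^a_i$ via $(a,i,y) \mapsto y$, the map $q \upharpoonright D_c$ corresponds to $p^a_i$, and $h$ provides the required lift.

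The only real obstacle is the first step: moving from the pointwise application of \textbf{(AMC)} on each fiber to a single set-indexed family. This is exactly what Strong Collection in \textbf{CZF} is designed to handle, and once that collection step is carried out the rest of the argument is a routine fiberwise unpacking of the \textbf{(AMC)} clause. No appeal to \textbf{(WS)} is needed.
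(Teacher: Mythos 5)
Your construction is essentially the paper's: apply {\bf (AMC)} fiberwise, collect, and build $C$ and $D$ as sets of tuples with the evident projections; the verification of the two clauses of the collection-square definition is the same routine unpacking, and you are right that {\bf (WS)} plays no role. The one place where you are too quick is precisely the step you single out as the only real obstacle. Strong Collection applied to $(\forall a \in A)(\exists \alpha)\,\varphi(a,\alpha)$ does not produce a \emph{function} $a \mapsto \alpha_a$ on $A$; it produces a set of witnesses such that every $a$ has at least one, i.e.\ after reindexing a surjection $p : C \epi A$ with a witness attached to each $c \in C$ (and possibly several incomparable witnesses over the same $a$). So the literally $A$-indexed family $\{(a, I_a, \langle Y^a_i, p^a_i\rangle_{i\in I_a}) : a \in A\}$ is not what collection hands you. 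There are two standard repairs, both trivial here: either let $C$ be this cover of $A$ rather than $A$ itself --- the definition of collection square only requires \emph{some} $p: C \to A$, and this is exactly what the paper does, setting $T_c = \bigcup Z_c$ to merge the several witnesses attached to a given $c$; or observe that the property ``every surjection onto $B_a$ is refined by one in $\alpha$'' is preserved under enlarging $\alpha$, so that one may take unions over all witnesses for a fixed $a$ and extract an honest function on $A$ --- this is precisely \reflemm{semichoicelemma}, which the paper proves and invokes later for the same purpose. With either fix the rest of your argument goes through unchanged.
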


Note that for the strong version of {\bf (AMC)}, this is really Proposition 4.6 in \cite{moerdijkpalmgren02}.

\begin{proof} Let $f: B \to A$ be any function. {\bf (AMC)} implies that:
\[ (\forall a \in A) \, (\exists \alpha \in {\rm Pow}({\rm Surj}(B_a))) \, \mbox{every surjection onto }
B_a \mbox{ is refined by one in } \alpha. \]
We may now apply the collection axiom to this statement: this gives us a surjection $p: C \epi A$ together with, for every $c \in C$, an inhabited collection $Z_c \subseteq {\rm Pow}({\rm Surj}(B_a))$ such that:
\[ (\forall c \in C) \, (\forall \alpha \in Z_c) \, \mbox{every cover of } B_{p(c)} \mbox{ is refined by an element of } \alpha. \]
Let $T_c = \bigcup Z_c$. Then clearly:
\[ (\forall c \in C) \, \mbox{every surjection onto } B_{p(c)} \mbox{ is refined by an element of } T_c. \]
So set $D = \{ (c \in C, t \in T_c, x \in {\rm dom}(t)) \}$ and let $g$ be the projection on the first coordinate and $q(c, t, x) = t(x)$. All the required verifications are now very easy and left to the reader.
\end{proof}

\begin{theo}{setcompactness} The combination of {\bf (WS)} and {\bf (AMC)} implies the Set Compactness Theorem.
\end{theo}
\begin{proof}
Let $S$ be a set and $\Phi$ be an inductive definition on $S$. Our aim is to construct a set $B$ of subsets of $S$ such that for each class $U \subseteq
S$ and each $a \in I(\Phi, U)$ there is a set $V \in B$ such that $V \subseteq U$
and $a \in I(\Phi, V)$.

Write $\Psi = \{ (X, a, b) \, : \, (X, a) \in \Phi, b \in X \}$ and consider the map $h: \Psi \to \Phi$ given by projection onto the first two coordinates. By composing this map with the sum inclusion $\Phi \to \Phi + S$, we obtain a map we call $f$.

{\bf (AMC)} implies that $f$ fits into a collection square with a small map $g$ on the left, as in:
\diaglab{collsqSC}{ D \ar[r]^q \ar[d]_g & \Psi \ar[d]^{f} \ar[r]^{\pi_3} & S \\
C \ar[r]_(.4)p & \Phi + S. }
We take the W-type $W(g)$ associated to $g$ and, because {\bf (WS)} holds, $W(g)$ is a set. We wish to regard certain elements of $W(g)$ as \emph{proofs}.

To identify these, define a map ${\rm conc}: W(g) \to S$ assigning to every element of $W(g)$ its \emph{conclusion} by case distinction, as follows:
\begin{eqnarray*}
{\rm conc}({\rm sup}_c(t)) & = & \left\{ \begin{array}{ll}
p(c) & \mbox{if } p(c) \in S, \\
a & \mbox{if } p(c) = (X, a) \in \Phi.
\end{array} \right.
\end{eqnarray*}
In addition, define inductively the function ${\rm ass}: W(g) \to {\rm Pow}(S)$ assigning to
every element of $W(g)$ its \emph{set of assumptions} as follows:
\begin{eqnarray*}
{\rm ass}({\rm sup}_c(t)) & = & \left\{ \begin{array}{ll}
\{ p(c) \} & \mbox{if } p(c) \in S, \\
\bigcup_{d \in g^{-1}(c)} {\rm ass}(td) & \mbox{otherwise.}
\end{array} \right.
\end{eqnarray*}
Finally, call an element ${\rm sup}_c(t) \in W(g)$ \emph{well-formed}, if $p(c) =
(X, a) \in \Phi$ implies that for all $d \in D_c$ the conclusion of $t(d)$ is
$\pi_3q(d)$ (the map $\pi_3q$ is the composite along the top in diagram \refdiag{collsqSC}); call it a \emph{proof}, if it and all its subtrees are well-formed.
Because the collection of subtrees of some tree in a W-type is a set (see the proof of Theorem 6.13 in \cite{bergmoerdijk08}), the collection of proofs is a set by bounded separation.

The proof will be finished once we show that:
\begin{eqnarray*}
 I(\Phi, U) & = & \{ x \in S \, : \, \mbox{there is a proof all whose assumptions
 belong to $U$} \\ & & \mbox{and whose conclusion is } x \}.
\end{eqnarray*}
Because from this expression it follows by bounded separation that $I(\Phi, U)$ is a set whenever $U$ is; in addition, it implies that the set $B = \{ {\rm ass}(w) \, : \, w \in W(g) \}$ is as required by the second half of the Set Compactness Theorem.

In other words, we have to show that
\begin{eqnarray*}
 J(\Phi, U) & = & \{ x \in S \, : \, (\exists w \in W(g)) \, w \mbox{ is a proof, }
 {\rm ass}(w) \subseteq U \mbox{ and } {\rm conc}(w) = x \}.
\end{eqnarray*}
is $\Phi$-closed, contains $U$ and is contained in every $\Phi$-closed subclass of
$S$ which contains $U$. To see that $J(\Phi, U)$ contains $U$, note that an element
${\rm sup}_c(t)$ with $p(c) = s \in S$ and $t$ the empty function is a proof whose sole assumption is $s$ and
whose conclusion is $s$. To see that it is $\Phi$-closed, let $(X, a) \in \Phi$ and
suppose that
\[ (\forall b \in X) \, b \in J(\Phi, U); \]
in other words, that
\[ (\forall b \in X) \, (\exists w \in W(g)) \, w \mbox{ is a proof, } {\rm ass}(w)
\subseteq U \mbox{ and } {\rm conc}(w) = x. \]
Now we use the collection square property to obtain a $c \in C$ with $p(c) = (X, a)
\in \Phi$ and a map $t: D_c \to W(g)$ such that for all $d \in D_c$, $td$ is a proof
with ${\rm ass}(td) \subseteq U \mbox{ and } {\rm conc}(td) = qd$. Hence ${\rm
sup}_c(t)$ is a proof with assumptions contained in $U$ and conclusion $a$ and
therefore $a \in J(\Phi, U)$, as desired.

It remains to show that $J(\Phi, U)$ contains every $\Phi$-closed subclass $A$
containing $U$. To this purpose, we prove the following statement by induction:
\begin{quote}
For all $w \in W(g)$, if $w$ is a proof and ${\rm ass}(w) \subseteq U$, then ${\rm
conc}(w) \in A$.
\end{quote}
So let $w = {\rm sup}_c(t) \in W(g)$ be a proof such that ${\rm ass}(w) \subseteq
U$. For every $d \in D_c$, $td$ is a proof with ${\rm ass}(td) \subseteq U$, so we
have ${\rm conc}(td) \in A$ by induction hypothesis. Now we make a case distinction as to whether $p(c)$ belongs to $S$ or $\Phi$:
\begin{itemize}
\item If $p(c) \in S$, then $w$ is a proof whose sole assumption is
$pc$ and whose conclusion is $pc$. Then it follows from ${\rm ass}(w) \subseteq U$ that $pc \in U \subseteq A$. Hence ${\rm conc}(w) = pc \in A$, as desired.
\item In case $p(c) = (X, a) \in \Phi$, we have to show $a = {\rm conc}(w) \in A$ and for that it suffices
to show that $b \in A$ for all $b \in X$, since $A$ is $\Phi$-closed. But for every
$b \in X$, there is a $d \in D_c$ with $p(d) = (X, a, b)$ and, since $w$ is
well-formed, $b = {\rm conc}(td) \in A$.
\end{itemize}
This completes the proof.
\end{proof}

\section{Stability under exact completion}

In the following sections we will show that {\bf (AMC)} and {\bf (WS)} are stable under exact completion, realizability, presheaves and sheaves, respectively. We will do this in the setting of algebraic set theory as developed in our papers \cite{bergmoerdijk08, bergmoerdijk09, bergmoerdijk10b} and to that purpose, we reformulate {\bf (AMC)} in categorical terms.

\begin{defi}{collectionsquare}
We call a square
\diag{ D \ar[r]^{q} \ar[d]_g & B \ar[d]^f \\
C \ar[r]_{p} & A }
a \emph{covering square}, if both $p$ and the canonical map $D \to B \times_A C$ are covers. We will call it a \emph{collection square}, if, in addition, the following
statement holds in the internal logic: for all $a \in A$ and covers $e: E \epi B_a$
there is a $c \in p^{-1}(a)$ and a map $h: D_c \to E$ such that the triangle
\diag{ & E \ar[dr]^e & \\
D_c \ar[ur]^h \ar[rr]_{q_c = q \upharpoonright D_c} & & B_a }
commutes. Diagrammatically, one can express the second condition by asking that any
map $X \to A$ and any cover $E \epi X \times_A B$ of the pullback fit into a cube
\begin{displaymath}
\xymatrix@!0{
 & Y \times_C D  \ar[rr] \ar'[d][dd] \ar[dl] &  & E \ar@{->>}[rr] & & X \times_A B
 \ar[dd] \ar[dl] \\
D \ar@{->>}[rrrr] \ar[dd] & &  &  & B \ar[dd]  \\
 & Y \ar@{->>}'[rrr][rrrr] \ar[dl] & & & & X, \ar[dl] \\
C \ar@{->>}[rrrr] &  & & & A }
\end{displaymath}
such that the face on the left is a pullback as indicated and the face at the back is covering.
\end{defi}

\begin{rema}{asymmetry} Note that being a covering or a collection square is really a property of an oriented square and in the definition the maps $p$ and $f$ play different roles. In this paper we will always draw collection and covering squares in such a way that the property holds from ``left to right'' (as in the definition), instead of from ``top to bottom''.
\end{rema}

In categorical terms the axiom now reads:
\begin{description}
\item[Axiom of Multiple Choice (AMC):]For any small map \func{f}{Y}{X}, there is a
cover \func{q}{A}{X} such that $q^*f$ fits into a collection square in which all
maps are small:
\diag{ D \ar[d] \ar@{->>}[r] &  A \times_X Y \ar[d]^{q^*f} \ar@{->>}[r] & Y \ar[d]^f
\\
C \ar@{->>}[r] & A \ar@{->>}[r]_q & X.}
\end{description}

We now proceed to show this axiom is stable under exact completion. We work in the setting of \cite{bergmoerdijk08} and use the same notation and terminology.  In particular, $(\ct{E}, \smallmap{S})$ will be a category with display maps and $(\overline{\ct{E}}, \overline{\smallmap{S}})$ will be its bounded exact completion as discussed in \cite{bergmoerdijk08}. If we say that {\bf (AMC)} holds in $(\ct{E}, \smallmap{S})$, then we will mean that {\bf (AMC)} holds with the phrase ``small map'' replaced by ``display map''.

We begin by stating two lemmas about collection squares:

\begin{lemm}{AMCexcompllemma1}
The embedding ${\bf y}: \ct{E} \to \overline{\ct{E}}$ preserves and reflects collection squares.
\end{lemm}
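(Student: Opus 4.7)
The plan is to exploit the standard properties of the embedding ${\bf y}: \ct{E} \to \overline{\ct{E}}$ recorded in \cite{bergmoerdijk08}: ${\bf y}$ is fully faithful, preserves finite limits, sends display maps to small maps, and both preserves and reflects covers between representables. Two further facts are crucial: every object of $\overline{\ct{E}}$ admits a cover by a representable, and every cover onto an object of the form ${\bf y}(A)$ in $\overline{\ct{E}}$ is refined by the ${\bf y}$-image of a cover in $\ct{E}$. Together these provide the standard dictionary for transferring diagrammatic statements across ${\bf y}$.

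The first clause of Definition \ref{collectionsquare}, asking that $p$ and the inscribed map $D \to B \times_A C$ be covers, is preserved and reflected immediately, since ${\bf y}$ preserves and reflects both pullbacks and covers between representables. For the second clause I would work with its ``diagrammatic'' reformulation already provided in Definition \ref{collectionsquare}, which is phrased entirely in terms of maps, pullbacks and covers without appeal to the internal language.

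For preservation, suppose the given square is a collection square in $\ct{E}$. A test diagram in $\overline{\ct{E}}$ consists of an arrow $X \to {\bf y}(A)$ together with a cover $E \epi X \times_{{\bf y}(A)} {\bf y}(B)$. First cover $X$ by a representable ${\bf y}(X')$, then use the refinement property to refine the pulled-back cover by one of the form ${\bf y}(E') \epi {\bf y}(X' \times_A B)$ arising from a cover in $\ct{E}$. The resulting reduced test diagram now lies in the image of ${\bf y}$, so the collection-square hypothesis in $\ct{E}$ supplies the required cube there; applying ${\bf y}$ and composing with the refinements yields the cube demanded in $\overline{\ct{E}}$. For reflection, any test diagram in $\ct{E}$ is mapped under ${\bf y}$ to a test diagram in $\overline{\ct{E}}$, and the resulting cube consists of representables (as ${\bf y}$ preserves the relevant pullbacks), so fullness of ${\bf y}$ recovers arrows in $\ct{E}$ with the required properties.

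The main obstacle is making the refinement step in the preservation direction precise: one needs the clean statement that every cover $E \epi {\bf y}(X' \times_A B)$ in $\overline{\ct{E}}$ is refined by the ${\bf y}$-image of a cover in $\ct{E}$. This is the point where the construction of $\overline{\ct{E}}$ as a bounded exact completion enters essentially, and where I would invoke the characterisation of covers in $\overline{\ct{E}}$ from \cite{bergmoerdijk08}; everything else is bookkeeping with pullbacks and fullness.
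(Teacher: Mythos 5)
Your overall strategy coincides with the paper's: reduce both directions to the standard properties of ${\bf y}$ (fully faithful, covering, preserving and reflecting pullbacks and covers), dispose of the covering-square clause immediately, and for the collection clause reduce the test data to representables. The preservation direction is correct and matches the paper's argument step for step, including the refinement of the pulled-back cover by one of the form ${\bf y}E' \to {\bf y}(X' \times_A B)$.

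The reflection direction, however, contains a genuine gap. You assert that the cube produced by applying the collection-square property in $\overline{\ct{E}}$ to the ${\bf y}$-image of a test diagram ``consists of representables''. It does not: the collection-square property only asserts the \emph{existence} of some object $Y$ equipped with a map $Y \to {\bf y}C$ and a cover $Y \epi {\bf y}X$ making the left face a pullback and the back face covering, and nothing forces this $Y$ to lie in the image of ${\bf y}$ --- the existential quantifier ranges over all of $\overline{\ct{E}}$. Consequently fullness cannot be applied directly to recover the cube in $\ct{E}$. The repair is the same move you already make in the preservation direction: cover $Y$ by a representable ${\bf y}Y'$, paste the pullback square ${\bf y}(D \times_C Y') \to q^*{\bf y}D$ over ${\bf y}Y' \epi Y$ onto the left of the diagram (covering squares compose, so the pasted square is still covering), and only after this step is every object in sight representable, so that fullness of ${\bf y}$ together with reflection of covers and pullbacks transports the configuration back to $\ct{E}$. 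This is precisely how the paper completes the reflection argument; with that one extra covering-and-pasting step inserted, your proof goes through.
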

\begin{proof}
Recall from Theorem 5.2 in \cite{bergmoerdijk08} that ${\bf y}$ has the following properties:
\begin{enumerate}
\item ${\bf y}$ is full and faithful,
\item ${\bf y}$ is covering, i.e., every object in $\overline{\ct{E}}$ is covered by
one in the image of {\bf y},
\item ${\bf y}$ preserves and reflects pullbacks,
\item ${\bf y}$ preserves and reflects covers.
\end{enumerate}
From items 3 and 4 it follows that ${\bf y}$ preserves and reflects covering squares.

To show that ${\bf y}$ preserves collection squares, suppose that we have a
collection square
\diag{ D \ar[r]^{\sigma} \ar[d] & B \ar[d] \\
C \ar[r]_{\rho} & A }
in \ct{E}, a map $X \to {\bf y}A$ and a cover $E \twoheadrightarrow {\bf y}B
\times_{{\bf y}A} X$. Using item 2, we find a cover $q: {\bf y}X' \to X$ and a cover
${\bf y}E' \to (\id_{{\bf y}B} \times_{{\bf y}A} q)^*E$. Then we may apply the
collection square property in $\ct{E}$ to obtain a diagram of the desired shape.

To see that ${\bf y}$ reflects collection squares, suppose that
\diag{ {\bf y}D \ar[r]^{{\bf y}\sigma} \ar[d]_{{\bf y}g} & {\bf y}B \ar[d] \\
{\bf y}C \ar[r]_{{\bf y}\rho} & {\bf y}A }
is a collection square in $\overline{\ct{E}}$.  Then, if $X \to A$ is any map and $E \to B \times_A X$ is a cover in $\ct{E}$, this is preserved by ${\bf y}$, so that we obtain a map $t: Y \to {\bf y}C$ and a covering square
\diag{ q^*{\bf y}D \ar[r] \ar[d]_{t^*{\bf y}g} & E \ar@{->>}[r] & {\bf y}B \times_{{\bf y}A} {\bf y}X \ar[d] \\
Y \ar@{->>}[rr] & & {\bf y}X.}
By covering $Y$ with an object ${\bf y}Y'$, sticking the pullback square
\diag{ {\bf y}(D \times_C Y') \ar[d] \ar[r] & q^*{\bf y}D \ar[d] \\
{\bf y}Y' \ar@{->>}[r] & Y }
to the left of the previous diagram and reflecting back along ${\bf y}$, we obtain a diagram of the desired form in $\ct{E}$.
\end{proof}

\begin{lemm}{AMCexcompllemma2}
Suppose we have a commuting diagram of the following shape
\diag{ F \ar@{->>}[r]^\beta \ar[d] & D \ar@{->>}[r]^{\sigma} \ar[d] & B \ar[d] \\
E \ar@{->>}[r]_\alpha & C \ar@{->>}[r]_{\rho} & A, }
where both squares are covering. If one of the two inner squares is a collection
square, then so is the outer square.
\end{lemm}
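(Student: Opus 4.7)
The plan is to verify the two clauses of the definition of a collection square for the outer square: that it is a covering square, and that it satisfies the lifting property. The covering part is immediate: the bottom $\rho\alpha$ is a composition of covers, and the canonical map $F \to B \times_A E$ factors as $F \to D \times_C E \to B \times_A E$, where the first factor is a cover because the left inner square is covering, and the second is the pullback of $D \to B \times_A C$ (a cover, since the right inner square is covering) along $B \times_A E \to B \times_A C$, hence also a cover.

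For the lifting property I split into cases on which inner square is the collection one, arguing in the internal language; let $g \colon D \to C$ and $h \colon F \to E$ denote the middle and left verticals. Suppose first that the right inner square is a collection square, and take $a \in A$ together with a cover $e \colon X \epi B_a$. The right collection property yields some $c \in \rho^{-1}(a)$ and a map $k \colon D_c \to X$ with $e \circ k = \sigma \upharpoonright D_c$. Since $\alpha$ is a cover, pick $e' \in \alpha^{-1}(c)$; then $(\rho\alpha)(e') = a$, and the commutativity $g\beta = \alpha h$ forces $\beta$ to restrict to a map $F_{e'} \to D_c$. The composite $k \circ (\beta \upharpoonright F_{e'}) \colon F_{e'} \to X$ then factors $(\sigma\beta) \upharpoonright F_{e'}$ through $e$, as required.

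Now suppose instead that the left inner square is a collection square, and again take $a \in A$ together with a cover $e \colon X \epi B_a$. Using that $\rho$ is a cover, pick $c \in \rho^{-1}(a)$; since the right square is covering, the restriction $\sigma \upharpoonright D_c \colon D_c \epi B_a$ is a cover (as the restriction over $c$ of the cover $D \to B \times_A C$), so $X \times_{B_a} D_c \epi D_c$ is a cover. The left collection property, applied to $c$ and this cover, gives $e' \in \alpha^{-1}(c)$ together with a map $F_{e'} \to X \times_{B_a} D_c$ over $D_c$; composing with the projection to $X$ and chasing the pullback yields the required factorisation of $(\sigma\beta) \upharpoonright F_{e'}$ through $e$.

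The main subtlety --- not really an obstacle --- is translating these element-level arguments into the categorical (diagrammatic) formulation of collection squares used in this section: each time an element is picked from the fibre of a cover, one must in fact pass to that cover. But every operation used (composing and pulling back covers, invoking a collection square property of an inner square) is stable under further pullback, and the argument goes through essentially verbatim.
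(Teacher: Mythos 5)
Your proof is correct and follows essentially the same route as the paper: the outer square is covering because covering squares compose, and the collection property is verified in the internal logic by pulling the given cover of $B_a$ back along $\sigma\upharpoonright D_c$ and invoking the left collection square (resp.\ by applying the right collection square and then choosing any $e'\in\alpha^{-1}(c)$). The only differences are cosmetic: you spell out the composition-of-covering-squares step that the paper cites as a lemma, and you write out the right-hand case that the paper dismisses as ``very similar, but easier''.
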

\begin{proof}
Covering squares compose (Lemma 2.4.2 in \cite{bergmoerdijk08}), so the outer square
is covering. From now on, we reason in the internal logic. Assume that left square
is a collection square. Suppose $a \in A$ and $q: T \epi B_a$. Since $\rho$ is
a cover, we find a $c \in C$ such that $\rho(c) = a$, and because the square on the
left is collection, we find an element $e \in E$ together with a map $p: F_e \to
\sigma_c^* T$ such that the following diagram commutes:
\diag{ & \sigma_c^*T \ar[r] \ar@{->>}[d]^{\sigma_c^* q } & T \ar@{->>}[d]^q \\
F_e \ar[ur]^p \ar[r]_{\beta_e} & D_c \ar[r]_{\sigma_c} & B_a. }
Since $(\sigma\beta)_e = \sigma_c\beta_e$, this yields the desired result. The case
where the right square is a collection square is very similar, but easier.
\end{proof}

\begin{rema}{AMCanders}
Note that it follows from this lemma that {\bf (AMC)} could also have been formulated as
follows: every small map $f$ is covered by a small map $f'$ which is the right
edge in a collection square in which all maps are small (the same is true for display maps, see Lemma 2.11 in \cite{bergmoerdijk08}).
\end{rema}

\begin{theo}{AMCstableexcompl}
The axiom of multiple choice {\bf (AMC)} is stable under exact completion.
\end{theo}
\begin{proof}
Suppose that {\bf (AMC)} holds in \ct{E} and $f: B \to A$ is a small map in
$\overline{\ct{E}}$. By definition this means that $f$ is covered by a map of the form ${\bf y}f'$ with $f'$ display in \ct{E}. Since $f'$ is display in \ct{E} and {\bf (AMC)} holds in \ct{E}, we may cover $f'$ by a map $f''$ in \ct{E} which fits in a collection square in which all maps are display. That the same holds for $f$ in $\overline{\ct{E}}$ now follows from \reflemm{AMCexcompllemma1} and \refrema{AMCanders}.
\end{proof}

We will now show that {\bf (AMC)} is also reflected by exact completions.

\begin{lemm}{exclemma3}
Suppose we are given a commuting cube of the form
\begin{displaymath}
\xymatrix@!0{
 & D  \ar[rr] \ar'[d][dd] &  & B \ar[dd]  \\
H \ar[rr] \ar[dd] \ar@{->>}[ur] &  & F \ar[dd] \ar@{->>}[ur]  \\
 & C \ar'[r][rr] & & A,  \\
G \ar[rr] \ar@{->>}[ur] & & E \ar@{->>}[ur] }
\end{displaymath}
in which the right face is a pullback and the faces at the bottom and on the left
are covering. If the face at the back is a collection square, then so is the face at the front.
\end{lemm}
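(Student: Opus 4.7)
The plan is to verify the two conditions for the front face (with vertices $H, F, G, E$) to be a collection square in the left-to-right orientation, transporting the corresponding structure from the back face across the pullback identification on the right.

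First, I will check that the front face is a covering square. The right-face pullback assumption gives $F \cong B \times_A E$; combined with the commutativity of the cube this yields $F \times_E G \cong (B \times_A C) \times_C G$. The canonical map $H \to F \times_E G$ then factors as
\[
H \to D \times_C G \to (B \times_A C) \times_C G,
\]
where the first map is a cover because the left face is a covering square, and the second is the pullback along $G \to C$ of the cover $D \to B \times_A C$ supplied by the back face (which, being a collection square, is in particular a covering square). Hence $H \to F \times_E G$ is a cover. The bottom map $G \to E$ factors as $G \to C \times_A E \to E$; the first factor is a cover because the bottom face is covering, and the second is the pullback of the cover $C \to A$ (coming from the back-face collection square) along $E \to A$.

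Second, I will verify the collection property by reasoning in the internal language. Let $e \in E$ have image $a \in A$, and let $T \twoheadrightarrow F_e$ be a cover. Since $F \cong B \times_A E$, the fibre $F_e$ is identified with $B_a$, so we may view $T$ as a cover of $B_a$. Applying the collection property of the back face yields a $c \in C$ mapping to $a$ together with a map $h : D_c \to T$ commuting with the cover and the restriction of $D \to B$. Because $G \to C \times_A E$ is a cover, the compatible pair $(c, e)$ lifts to some $g \in G$ whose images in $C$ and $E$ are $c$ and $e$ respectively. Restricting the left-face cover $H \to D \times_C G$ to the fibre over $g$ produces a cover $H_g \twoheadrightarrow D_c$, and postcomposition with $h$ gives the required map $H_g \to T$. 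Commutativity of the resulting triangle follows from the top face of the cube, which equates the two composites $H_g \to D_c \to B_a$ and $H_g \to F_e \cong B_a$.

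The main obstacle is simply bookkeeping: tracking the identifications $F_e \cong B_a$, $(D \times_C G)_g \cong D_c$, and $F \times_E G \cong (B \times_A C) \times_C G$, and at each step invoking the appropriate face of the cube to secure the required commutativities. No essentially new idea is needed beyond the observation that the pullback hypothesis on the right face is exactly what allows the back-face collection property to be reinterpreted as a collection property on the front face, once the covers provided by the left and bottom faces have been used to transfer data back and forth.
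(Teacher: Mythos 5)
Your argument is correct and is precisely the internal-logic argument the paper has in mind: the paper's own proof consists of the single sentence ``An easy argument using the internal logic,'' and you have supplied exactly the details it leaves out. Both halves check out --- the covering-square property via the factorisations of $H\to F\times_E G$ through $D\times_C G$ and of $G\to E$ through $C\times_A E$, and the collection property via the identification $F_e\cong B_a$ furnished by the pullback on the right face together with a lift of $(c,e)$ along the cover $G\twoheadrightarrow C\times_A E$.
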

\begin{proof}
An easy argument using the internal logic.
\end{proof}

\begin{theo}{AMCreflectedexcompl}
The axiom of multiple choice {\bf (AMC)} is reflected by exact completions.
\end{theo}
\begin{proof} Suppose that {\bf (AMC)} holds in $\overline{\ct{E}}$ and $f: Y \to X$ is a small map in
\ct{E}. This means that there is a cover $q: D \to {\bf y}X$ in $\overline{\ct{E}}$
such that $q^*({\bf y}f)$ fits into the right hand side of a collection square in
which all maps are small. We construct a diagram
\begin{displaymath}
\xymatrix@!0{
 & A  \ar@{->>}[rr] \ar'[d][dd] &  & B \ar[dd]^{q^*({\bf y}f)} \ar@{->>}[rr] & &
 {\bf y}Y \ar[dd]^{{\bf y}f}  \\
{\bf y}E \ar@{->>}[rr] \ar[dd] \ar@{->>}[ur] &  & {\bf y}F \ar[dd] \ar@{->>}[ur]
\\
 & C \ar@{->>}'[r][rr] & & D \ar@{->>}[rr]_q & & {\bf y}X,  \\
{\bf y}G \ar@{->>}[rr] \ar@{->>}[ur] & & {\bf y}H \ar@{->>}[ur]_p }
\end{displaymath}
as follows. First we construct the bottom and left faces using Lemma 5.6 in
\cite{bergmoerdijk08}, so that both are covering and the maps ${\bf y}E \to {\bf
y}G$ and ${\bf y}G \to {\bf y}H$ are both small. Then the right face is constructed
by pullback and since ${\bf y}$ preserves pullbacks, we may assume that the result
is an object of the form ${\bf y}F$ and the map ${\bf y}F \to {\bf y}H$ is
($qp)^*({\bf y}f)$; in particular, it is small. To finish the construction of the
cube, we have to find a map ${\bf y}E \to {\bf y}F$: but that we obtain from the
universal property of ${\bf y}F$. Note that it follows from Lemma 2.11 in
\cite{bergmoerdijk08} that this map is also small. By the previous lemma, we now
know that the front face of the cube is a collection square in which all maps are
small. Since collection squares, small maps and pullbacks are reflected by {\bf y},
we have shown that {\bf (AMC)} is reflected by exact completion.
\end{proof}

Note that in \cite{bergmoerdijk08} we were unable to show that the axioms {\bf
($\Pi$S)} and {\bf (WS)} are stable under exact completion. In the presence of {\bf (AMC)}, however, we can.

\begin{theo}{PiSstableexcompl}
In the presence of {\bf (AMC)}, the exponentiation axiom {\bf ($\Pi$S)} is stable under exact completion.
\end{theo}
\begin{proof}
Relative to {\bf (AMC)} the exponentiation axiom is equivalent to fullness
(see \cite[Proposition 2.16]{bergmoerdijk10b}), so this follows from the stability of the fullness axiom under exact completion (Proposition 6.25 in \cite{bergmoerdijk08}).
\end{proof}

\begin{theo}{WSstableexcompl}
In the presence of {\bf (AMC)}, the axiom {\bf (WS)} is preserved by exact completion. If $(\ct{E}, \smallmap{S})$ is a category with small maps, it will also be reflected.
\end{theo}
\begin{proof}
The proof of Theorem 6.18 in
\cite{bergmoerdijk08} implies that the functor ${\bf y}$ preserves W-types. It also preserves smallness and if $\smallmap{S}$ is a class of small maps, it will reflect smallness as well (see \cite[Theorem 5.2]{bergmoerdijk08}).  Hence it follows that exact completions of categories with small maps reflect {\bf (WS)}.

It also follows that W-types for maps of the form ${\bf y}g$ with
$g$ a display map in \ct{E} are small in $\overline{\ct{E}}$. The proof of the
stability of {\bf (AMC)} under exact completion implies that for every small map $f: B
\to A$ in $\overline{\ct{E}}$ there is a cover $q: A' \epi A$ such that $q^*f$ fits
into a collection square with such a map ${\bf y}g$ on the left. It is a consequence
of the proof of Proposition 6.16 in \cite{bergmoerdijk08} that the W-type associated
to $q^*f$ is small and a consequence of Proposition 4.4 in \cite{moerdijkpalmgren00}
that the W-type associated to $f$ is small.
\end{proof}

\section{Stability under realizability}

In this section we show that the axiom of multiple choice is stable under
realizability. Recall from \cite{bergmoerdijk09} that the realizability category
over a predicative category of small maps \ct{E} is constructed as the exact
completion of the category of \emph{assemblies}. Within the category of assemblies
we identified a class of maps, which was not quite a class of small maps. In a
predicative setting the correct description of these \emph{display maps} (as we
called them) is a bit involved, but for the full subcategory of \emph{partitioned
assemblies} the description is quite simple: a map of partitioned assemblies $f: (B,
\beta) \to (A, \alpha)$ is small, if the underlying map $f: B \to A$ in \ct{E} is small. Many questions about assemblies can be reduced to (simpler) questions about the
partitioned assemblies: essentially this is because the inclusion of partitioned
assemblies in assemblies is full, preserves finite limits and is \emph{covering}
(i.e., every assembly is covered by a partitioned assembly). Moreover, every display
map between assemblies is covered by a display map between partitioned assemblies.
For more details, we refer to \cite{bergmoerdijk09}.

\begin{theo}{AMCstablerealizability}
The axiom of multiple choice {\bf (AMC)} is stable under realizability.
\end{theo}
\begin{proof}
We show that {\bf (AMC)} holds in the category of assemblies over a predicative category of
classes \ct{E}, provided that it holds in \ct{E}. The result will then follow from
\reftheo{AMCstableexcompl} above.

Suppose $f$ is a display map of assemblies. We want to show that $f$ is covered by a
map which fits into a collection square in which all maps are display. Without loss of
generality, we may assume that $f$ is a display map of partitioned assemblies $(B,
\beta) \to (A, \alpha)$. For such a map, the underlying map $f$ in \ct{E} is small.
We may therefore use the axiom of multiple choice in \ct{E} to obtain a diagram of
the form
\diag{ F \ar@{->>}[r]^q \ar[d] & D \ar@{->>}[r]^{s} \ar[d] & B \ar[d]^f \\
E \ar@{->>}[r]_p & C \ar@{->>}[r]_{r} & A, }
in which the square on the left is a collection square in which all maps are small
and the one on the right is a covering square. We obtain a similar diagram in the
category of (partitioned) assemblies
\diag{ (F, \phi) \ar@{->>}[r]^q \ar[d] & (D, \delta) \ar@{->>}[r]^s \ar[d] & (B,
\beta) \ar[d]^f \\
(E, \epsilon) \ar@{->>}[r]_p & (C, \gamma) \ar@{->>}[r]_r & (A, \alpha), }
by defining $\gamma: C \to \NN$ by $\gamma(c) = \alpha r(c)$, and similarly $\epsilon(e) = \alpha rp(c), \delta(d) = \beta
s(d)$ and $\phi(f) = \beta sq(f)$. It is clear that both squares are covering, so it
remains to check that the one on the left is a collection square.

So suppose we have a map $t: (X, \chi) \to (C, \gamma)$ and a cover
\[ h: (M, \mu) \epi (X, \chi) \times_{(C, \gamma)} (D, \delta) = (X \times_C D, \kappa) \]
in the category of assemblies. Without loss of generality, we may assume that both $(X,
\chi)$ and $(M, \mu)$ are partitioned assemblies and $(X \times_C D, \kappa)$ is the partitioned assembly with $\kappa(x, d) = < \chi(x), \delta(d) >$. Define
\begin{eqnarray*}
X'& = & \{ (x \in X, n \in \NN) \, : \, n \mbox{ realizes the surjectivity of } h
\}, \\
M'& = & \{ (m \in M, n \in \NN) \, : \, (\pi_1h(m), n) \in X' \mbox{ and } n \cdot
\kappa(h(m)) = \mu(m) \},
\end{eqnarray*}
and consider the diagram
\diag{ M' \ar[r]^(.4){h'} & X'\times_C D \ar[r] \ar[d] & D \ar[d] \\
& X'\ar[r]_{t\pi_0} & C }
with $h'(m, n) = (\pi_1h(m), n, \pi_2h(m))$. By definition of $X'$, the map $h'$ is
a cover, so we may apply the collection square property in \ct{E} to obtain a map
$w: Y \to E$ and a covering square of the form
\diag{ w^*F \ar[d]_{l} \ar[r]^{k'} & M' \ar[r]^{h'} & X'\times_C D \ar[d] \\
Y \ar[rr]_v & & X'. }
Writing $u = \pi_1v: Y \to X$ and $\upsilon(y) = < \chi u(y), \pi_2 v(y) >$, we obtain
a similar covering diagram
\diag{ w^*(F, \phi) \ar[d]_l \ar[r]^k & (M, \mu) \ar[r]^(.4){h} & (X \times_C D, \kappa)
\ar[d] \\
(Y, \upsilon) \ar[rr]_u & & (X, \chi) }
in the category of assemblies:
\begin{enumerate}
\item The map $u$ is a cover, essentially because $\pi_1: X' \to X$ is.
\item The map $k = \pi_0k'$ is tracked, because the realizer of an element $z$ in
$w^*(F, \phi)$ is the pairing of the realizers of its images $(hk)(z)$ and $l(z)$.
From the latter, one can compute the second
component $n$ of $(vl)(z)$. One may now compute the realizer of $k(z)$ by applying
this $n$ to the realizer of $(hk)(z)$ (by definition of $M'$).
\item The square is a quasi-pullback, with the surjectivity of the unique map to the
pullback being realized by the identity.
\end{enumerate}
This concludes the proof.
\end{proof}

Again, we are able to show that {\bf (AMC)} is also reflected by realizability.

\begin{theo}{AMCreflunderrealiz}
The axiom of multiple choice is reflected by realizability.
\end{theo}
\begin{proof}
By \reftheo{AMCreflectedexcompl} it is sufficient to prove that the axiom of multiple choice holds in \ct{E} whenever it holds in$ \Asm(\ct{E})$. Recall that there are two functors $\nabla: \ct{E} \to \Asm(\ct{E})$ and $\Gamma:
\Asm(\ct{E}) \to \ct{E}$, with $\nabla$ sending an object $X$ to the pair $(X, X
\times \NN)$ and $\Gamma$ sending an object $(X, \chi)$ to $X$. Both $\nabla$ and
$\Gamma$ preserve small maps, pullbacks, covers and (hence) covering squares. In
addition, $\Gamma \nabla = \id$, so it suffices to show that $\Gamma$ preserves
collection squares.

Let
\diag{ (D, \delta) \ar[r] \ar[d] & (C, \gamma) \ar[d] \\
(B, \beta) \ar[r] & (A, \alpha) }
be a collection square in $\Asm(\ct{E})$ and suppose we are given a map $f: X \to A$
and a cover $q: E \to X \times_A B$ in \ct{E}. By putting $\chi(x) = \alpha(fx)$,
$\psi(x, b) = \{ < m, n > \, : \, m \in \chi(x), n \in \beta(b) \}$ and $\epsilon(e) = \psi(q(e))$ and using the
collection square property of the figure above, we obtain a diagram in
$\Asm(\ct{E})$ of the following form:
\begin{displaymath}
\xymatrix@!0{
 & & (Y \times_C D, \rho)  \ar[rrr] \ar'[d][dd] \ar[dll] &  & & (E, \epsilon)
 \ar@{->>}[rrr] & & & (X \times_A B, \psi) \ar[dd] \ar[dll] \\
(D, \delta) \ar@{->>}[rrrrrr] \ar[dd] & & & & &  & (B, \beta) \ar[dd]  \\
 & & (Y, \omega) \ar@{->>}'[rrrr][rrrrrr] \ar[dll] & & & & & & (X, \chi), \ar[dll]
 \\
(C, \gamma) \ar@{->>}[rrrrrr] & & &  & & & (A, \alpha) },
\end{displaymath}
where the left face is a pullback and the face at the back is covering. By applying
$\Gamma$ to this diagram we obtain the desired result.
\end{proof}

In \cite{bergmoerdijk09} we were unable to show that the axioms {\bf ($\Pi$S)} and
{\bf (WS)} are stable under realizability. This was because we were unable to show
that they were stable under exact completion. But as that was our only obstacle, we
now have:

\begin{theo}{AMCimplaxiomsstablereal}
In the presence of {\bf (AMC)}, the axioms {\bf ($\Pi$S)} and {\bf (WS)} are stable under realizability.
\end{theo}
\begin{proof}
Since both {\bf ($\Pi$S)} and {\bf (WS)} are inherited by the category of assemblies
(Propositions 20 and 21 in \cite{bergmoerdijk09}), this follows from
\reftheo{PiSstableexcompl} and \reftheo{WSstableexcompl}, respectively.
\end{proof}

\begin{theo}{AMCimplWSeflectedreal}
In the presence of {\bf (AMC)}, the axiom {\bf (WS)} is reflected by realizability.
\end{theo}
\begin{proof}
It follows immediately from the description of W-types in the category of assemblies (see \cite[Proposition 21]{bergmoerdijk09}) that $\nabla$ preserves W-types. As $\nabla$ reflects smallness, it follows that the axiom {\bf (WS)} is reflected by realizability.
\end{proof}

\section{Stability under sheaves}

In this section we will show that {\bf (AMC)} is preserved and reflected by sheaf extensions. Theorem 4.21 in \cite{bergmoerdijk10b} shows that {\bf (WS)} is preserved by sheaf extensions in the presence of strong {\bf (AMC)}, but it is not hard to see that the same argument shows that {\bf (WS)} is preserved with our present version of {\bf (AMC)}; that it is also reflected will be \reftheo{Wtypesreflsheaves} below. We will use notation and terminology from \cite{bergmoerdijk10b}. In particular, $(\ct{E}, \smallmap{S})$ is a predicative category with small maps satisfying the fullness axiom {\bf (F)} and $(\ct{C}, {\rm Cov})$ is an internal site in \ct{E} which has a presentation and whose codomain map ${\rm cod}: \ct{C}_1 \to \ct{C}_0$ is small. We will write $\pi^*: \pshct{\ct{E}}{\ct{C}} \to \ct{E}/\ct{C}_0$ for the forgetful functor and $\pi_!$ for its left adjoint, which sends a pair $(X, \sigma_X: X \to \ct{C}_0)$ to the following sum of representables:
\[ \pi_!(X, \sigma_X) = \sum_{x \in X} {\bf y}(\sigma_X(x)). \]
In other words,
\[ \pi_!(X, \sigma_X)(c) = \{ \, (x, \gamma) \, : \, d \in \ct{C}_0, x \in X(d), \gamma: c \to d \in \ct{C}_1 \, \}. \]
Given two objects $(Y, \sigma_Y)$ and $(X, \sigma_X)$ in $\ct{E}/\ct{C}_0$ and a pair of maps $k: Y \to X$ and $\kappa: Y \to \ct{C}_1$ such that
\diag{ Y \ar[d]_k \ar[r]_\kappa \ar@/^1pc/[rr]^{\sigma_Y} & \ct{C}_1 \ar[d]^{\rm cod} \ar[r]_{\rm dom} & \ct{C}_0 \\
X \ar[r]_{\sigma_X} & \ct{C}_0}
commutes, we obtain a map of presheaves $(k, \kappa)_!: \pi_!Y \to \pi_!X$ sending a pair $(y, \gamma)$ to $(k(y), \kappa_y \gamma)$. In fact, every map $\pi_! Y \to \pi_! X$ is of this form. Finally, we will write $i^*$ for the sheafification functor $\pshct{\ct{E}}{\ct{C}} \to \shct{\ct{E}}{\ct{C}}$ and $\rho_!: \ct{E}/\ct{C}_0 \to \shct{\ct{E}}{\ct{C}}$ for the composition of $\pi_!$ and $i^*$.

\begin{theo}{AMCstableundersheaves}
The axiom of multiple choice {\bf (AMC)} is preserved by sheaf extensions.
\end{theo}

Note that for strong {\bf (AMC)} this was proved in Section 10 of \cite{moerdijkpalmgren02}.

\begin{proof}
In this proof we assume that the underlying category \ct{C} has chosen pullbacks, something we may do without loss of generality. Consider a map
\[ i^*(k, \kappa)_!: \rho_! Y \to \rho_! X\] of sheaves in which $k$ is small. Since by definition every small map is covered by one of this form, it suffices to show that for every such map there is a cover such that pulling back the map along that cover gives a map which is the right edge in a collection square in which all maps are small.

Using {\bf (AMC)} in \ct{E}, we know that there is a cover $p: I \to X$ in \ct{E} such that $p^*k$ fits into a collection square in which all maps are small:
\diag{ B \ar[d]_m \ar@{->>}[r]^w & J \ar[d]^{p^*k} \ar@{->>}[r]^q & Y \ar[d]^{k}
\\
A \ar@{->>}[r]_v & I \ar@{->>}[r]_p & X. }
Now we make a host of definitions. Define $\sigma_I = \sigma_X p, \sigma_A = \sigma_X pv, \sigma_J = \sigma_Y q, \sigma_B = \sigma_Y qw, \mu_b = \kappa_{qwb}$. Furthermore, we define an object $S$ fibred over $A$: $S_a$ consists of pairs $(\gamma, \varphi)$ with $\gamma$ a map in $\ct{C}_1$ with codomain $\sigma_A(a)$ and $\varphi$ a map assigning to every $b \in B_a$ a sieve $S \in {\rm BCov}(\gamma^*\sigma_B(b))$, where $\gamma^*\sigma_B(b)$ denotes the following pullback in $\ct{C}$:
\diag{ \gamma^*\sigma_B(b) \ar[r] \ar[d] & \sigma_B(b) \ar[d]^{\mu_b} \\
\bullet \ar[r]_{\gamma} & \sigma_A(a). }
We also define an object $M$ fibred over $S$, with the fibre over $(a, \gamma, \varphi)$ consisting of pairs $b \in B_a$ and $\alpha \in \varphi(b)$. We obtain a commuting square as follows:
\diag{ M \ar[r]^h \ar[d]_n & B \ar[d]^m \\
S \ar@{->>}[r]_g & A, }
in which all maps are small and $g$ is a cover.

We apply {\bf (AMC)} again, but now to $n$. Strictly speaking, one would obtain a cover $r: P \to S$ such that $r^*n$ fits into the right-hand side of a collection square in which all maps are small. We claim that we may assume, without loss of generality, that $r = \id$, so that already $n$ fits into the right-hand side of a collection square. Its proof is a bit of a distraction from the main thread of the argument, so probably best skipped on a first reading.

{\bf Proof of the claim.} Applying {\bf (AMC)} to $n$ yields a diagram
\diag{ W \ar[d] \ar[r] & Q \ar[d] \ar[r] & M \ar[d]^n \\
V \ar[r] & P \ar@{->>}[r]_r & S }
in which the left square is collection and the right one a pullback. By applying the collection axiom to the small map $vg$ and the cover $r$, we obtain a diagram of the form
\diag{ S'' \ar[r] \ar[d] & S' \ar[r] \ar[d] & A' \ar[r] \ar[d] &
I' \ar[d] \\
P \ar[r]_r & S \ar[r]_g & A \ar[r]_v & I, }
in which $I' \to I$ is a cover, the two rightmost squares are pullbacks and $S'' \to S'$ is a small cover. The idea is to replace $J, A, B, S, M$ with their pullbacks along $I' \to I$. Call these $J', A', B', S', M'$, respectively. Crucially, $S'$ and $M'$ are then defined in the same way from $A'$ and $B'$ as $S$ and $M$ are defined from $A$ and $B$.

We pull back the collection square on $P$ along $S'' \to P$ and obtain a new collection square on $S''$, in which all maps are still small:
\begin{displaymath}
\xymatrix@!0{
 & W \ar'[d][dd]\ar[rrrr] & &  & & Q
 \ar[dd] \\
D \ar[rrrr] \ar[dd] \ar[ur] & &  &  & T \ar[dd] \ar[ur]  \\
 & V \ar'[rrr][rrrr] & & & & P \\
C \ar[rrrr] \ar[ur] &  & & & S''. \ar[ur] }
\end{displaymath}
By the universal property of $M'$ we obtain a map $T \to M'$ making the diagram
\begin{displaymath}
\xymatrix@!0{
& &  & Q \ar'[d][dd]\ar[rrrr] & &  & & M
 \ar[dd] \\
D \ar[rr] \ar[dd] & & T \ar[rrrr] \ar[dd] \ar[ur] & &  &  & M' \ar[dd] \ar[ur]  \\
  & & & P \ar'[rrr][rrrr] & & & & S \\
C \ar[rr] & & S'' \ar[rrrr] \ar[ur] &  & & & S'. \ar[ur] }
\end{displaymath}
commute. Note that this map is small, because all others in the front of the cube are. Since the left and right faces of the cube are pullbacks and the back is covering, the front of the cube is covering as well. Therefore not only the square on $S''$ is a collection square, but also the pasting of that square with the front of the cube (by \reflemm{AMCexcompllemma2}). As a result, we have
\diag{D \ar[r] \ar[d] & M' \ar[r] \ar[d] & B' \ar[d] \ar[r] & J' \ar[d] \ar[r] & Y \ar[d] \\
C \ar[r] & S' \ar[r] &  A' \ar[r] & I' \ar[r] & X, }
in which the first and third square (from the left) are collection squares in which all maps are small. This proves the claim. $\Box$

So from now on we work under the assumption that $r = \id$ and $n$ is the right-hand edge of a collection square. The result is a diagram of the following shape:
\diag{ D \ar[d]_s \ar[r]^f & M \ar[d]^n \ar[r]^h & B \ar[d]^m \ar[r]^w & J \ar[d]^{p^*k} \ar[r]^q & Y \ar[d]^k \\
C \ar[r]_e & S \ar[r]_g & A \ar[r]_v & I  \ar[r]_p & X, }
where the first and third square (from the left) are collection squares. Note that all maps in this diagram except for $p$ and $q$ are small. For convenience, we write $o = vge, t = whf$ and observe that $o$ is epi.

We wish to construct a diagram of the following shape in presheaves:
\diag{ \pi_! D \ar[d]_{(s, \sigma)_!} \ar[rr]^{(t, \theta)_!} & & \pi_! J \ar[rr]^{\pi_! q} \ar[d]^{(p^*k, \kappa q)_!}  & & \pi_! Y \ar[d]^{(k, \kappa)_!} \\
\pi_! C \ar[rr]_{(o, \omega)_!} & & \pi_! I \ar[rr]_{\pi_! p} & & \pi_! X.}
Understanding the right square should present no problems: but note that it is a pullback with a cover at the bottom. The remainder of the proof explains the left square and shows that its sheafification is a collection square in the category of sheaves. That would complete the proof.

Every element $c \in C$ determines an element $e(c) = (a, \gamma, \varphi) \in S$. We put $\omega_c = \gamma$ and $\sigma_C(c) = {\rm dom}(\gamma)$. Note that this turns $(o, \omega)_!$ into a cover. Similarly, every $d \in D$ determines an element $f(d) = (b, \gamma, \varphi, \alpha) \in M$. We put $\alpha_d = \alpha$, $\sigma_D(d) = {\rm dom}(\alpha)$, $\sigma_d = \pi_1 \circ \alpha$ and $\theta_d = \pi_2 \circ \alpha$, where $\pi_1$ and $\pi_2$ are the legs of the pullback square
\diag{ \gamma^* \sigma_B(b) \ar[d]_{\pi_1} \ar[r]^{\pi_2} & \sigma_B(b) \ar[d]^{\mu_b} \\
\bullet \ar[r]_\gamma & \sigma_A(mb).}
in \ct{C}. Note that this makes the map from $\pi_!D$ to the inscribed pullback of the left square locally surjective; hence its sheafification is covering.

In order to show that the sheafification of the left square is a collection square,
suppose that we have a map $z: V \to \rho_! I$ and a cover $d: Q \to z^* \rho_!
J$ of sheaves. Let $W$ be the pullback in presheaves of $V$ along $\pi_!(I) \to \rho_!(I)$ and cover $W$ using the counit $\pi_!\pi^*W \to W$. Writing $L = \pi^*W$, this means that we have a commuting square of presheaves
\diag{ \pi_!L \ar[rr]^{(r, \rho)_!} \ar[d] & & \pi_!(I) \ar[d] \\
V \ar[rr]_z & & \rho_!(I) }
in which the vertical arrows are locally surjective and the top arrow is of the form $(r, \rho)_!$. Finally, let $E: P \to \pi_!(L \times_I J)$ be the pullback of $d: Q \to z^* \rho_!J$ along the unique map $\pi_!(L \times_{I} J) \to z^*\rho_!J$ making
\begin{displaymath}
\xymatrix@!0{
  & P \ar[rr]^(.3)E \ar[dl] & &  \pi_! (L \times_I J) \ar[dl] \ar'[d][dd]\ar[rrrr] & &  & & \pi_! J
 \ar[dd] \ar[dl] \\
Q \ar[rr]_d & &  z^*\rho_! J \ar[rrrr] \ar[dd] & &  &  & \rho_! J \ar[dd]  \\
 & & & \pi_!L \ar'[rrr]^{(r, \rho)_!}[rrrr] \ar[dl] & & & & \pi_! I, \ar[dl] \\
& & V \ar@{->>}[rrrr]_{z} &  & & & \rho_!I }
\end{displaymath}
commute.

Since $d$ is locally surjective, the same applies to $E$. Reasoning in the internal logic, this means that the following statement holds:
\[ (\forall l \in L) \, (\forall j \in J_{r(l)}) \, (\exists S \in {\rm
BCov}(\rho_l^*\sigma_J(j)) \, (\forall \alpha \in S) \, (\exists p \in P) \, E(p)
= ((l, j), \alpha). \]
Using the collection square property, we find for every $l \in L$ an element $a \in A$ with $v(a) = r(l)$ together with a function $\varphi \in \Pi_{b \in B_a} {\rm BCov}(\rho_l^*(\sigma_B(b)))$ such that:
\[ (\forall b \in B_a) \, (\forall \alpha \in \varphi(b)) \, (\exists p \in P) \,
E(p) = ((l, w(b)), \alpha). \]
Again using the collection square property, we find for every $l \in L$ an element $c \in C$ with $e(c) = (a, \rho_l, \varphi)$ and a function $\psi: D_c \to P$ such that
\[ (\forall d \in D_c) \, E(\psi(d)) = ((l, t(d)), \alpha_d). \]
(Remember $t = whf$.) Therefore we obtain a diagram of the following shape in \ct{E}:
\begin{displaymath}
\xymatrix@!0{
 & U \times_C D \ar[dl] \ar'[d][dd]\ar[rrrr]^b & &  & & L \times_{I} J
 \ar[dd] \ar[dl] \\
D \ar@{->>}[rrrr] \ar[dd] & &  &  & J \ar[dd]  \\
 & U \ar@{->>}'[rrr]^\eta[rrrr] \ar[dl]_\epsilon & & & & L, \ar[dl]^r \\
C \ar@{->>}[rrrr]_{o} &  & & & I }
\end{displaymath}
with
\begin{eqnarray*}
U & = & \{ (l \in L, c \in C, \psi: D_c \to P) \, : \, o(c) = r(l), \omega_c = \rho_l
\mbox{ and } \\
& & (\forall d \in D_c) \, E(\psi(d)) = ((l, t(d)), \alpha_d) \},
\end{eqnarray*}
$\eta, \epsilon$ the obvious projections and $b(l, d, \psi) = (l, t(d))$. We now obtain a
diagram of presheaves of the shape
\begin{displaymath}
\xymatrix@!0{
 & & \pi_!(U \times_C D)  \ar[rrrrrr]^{(b, \beta)_!} \ar'[d][dd]
 \ar[dll] & &  & & & & \pi_!(L \times_{I} J) \ar[dd] \ar[dll] \\
\pi_!D \ar@{->>}[rrrrrr]^{(t, \theta)_!} \ar[dd]_{(s, \sigma)_!} & &  & & & & \pi_! (J)
\ar[dd]  \\
 & & \pi_!U \ar@{->>}'[rrrr]^{\pi_!\eta}[rrrrrr] \ar[dll]_{\pi_! \epsilon} & & & & & &
 \pi_!L, \ar[dll]^{(r, \rho)_!} \\
\pi_!C \ar@{->>}[rrrrrr]_{(o, \omega)_!} & & &  & & & \pi_!I }
\end{displaymath}
with $\sigma_U(c, l, \psi) = {\rm dom}(\omega_c) = \sigma_C(c) = {\rm dom}(\rho_l) = \sigma_L(l)$. In this diagram, the square on the left is a pullback square computed in the customary manner with $\sigma_{U \times_C D}(l, d, \psi) = \sigma_D(d)$, and the unique map $(b, \beta)_!$ filling the diagram is given by $\beta_{(l, d, \psi)} = \alpha_d$.

We now show that the sheafification of the square at the back is covering. First observe that $\pi_!\eta$ is a cover, since $\eta$ is. Therefore we only need to show that the square at the back is ``locally'' a quasi-pullback. To that end, suppose we have an element $((l, c, \psi), \pi_1)$ in $U$ and element $((l, j), \id) \in \pi_!(L \times_I J)$, where $\pi_1$ is the projection obtained as in
\diag{ \sigma_{L \times_I J}(l, j) \ar[d]_{\pi_1} \ar[rr]^{\pi_2} & & \sigma_J(j) \ar[d]^{\kappa_{qj}} \\
\bullet \ar[rr]_{\omega_c = \rho_l} & & \sigma_I((p^*k)(j)).}
If $e(c) = (a, \omega_c, \varphi)$, then we find a $b \in B_a$ with $w(b) = j$. Writing $T = \varphi(b) \in {\rm BCov}(\sigma_{L \times_I J}(l, j))$, we find for every $\alpha \in T$ an element $d \in D_c$ with $f(d) = (b, \omega_c, \varphi, \alpha)$. Projecting $((l, d, \psi), \id) \in \pi_!(U \times_C D)$ to $\pi_!(L \times_I J)$ yields $((l, t(d)), \alpha_d) = ((l, j), \id) \cdot \alpha$ and projecting $((l, d, \psi), \id) \in \pi_!(U \times_C D)$ to $\pi_!U$ yields $((l, c, \psi), \pi_1 \circ \alpha_d) = ((l, c, \psi), \pi_1) \cdot \alpha$. This shows that the square at the back is ``locally'' covering. (We have used here that every element in an object of the form $\pi_!Z$ is a restriction of one of the form $(z, \id)$ and that it therefore suffices for proving that a map $Q: R \to \pi_!Z$ is locally surjective to show that every element of the form $(z, \id)$ is ``locally hit'' by $Q$.)

To complete the proof we need to show that $(b, \beta)_!$ factors through $E: P \to \pi_!(L \times_I J)$. But to define a map $G: \pi_!(U \times_C D) \to P$ is, by the adjunction, the same thing as to give a map $U \times_C D \to P$, which we can do by sending $(l, d, \psi)$ to $\psi(d)$. To show that $(b, \beta)_! = E \circ G$, it suffices to calculate:
\begin{eqnarray*}
(E \circ G)((l, d, \psi), \id) & = & E(\psi(d)) \\
& = & ((l, t(d)), \alpha_d) \\
& = & (b, \beta)_!((l, d, \psi), \id).
\end{eqnarray*}
This completes the proof.
\end{proof}

We will finish this section by showing that {\bf (AMC)} and {\bf (WS)} are reflected by taking sheaves over an internal Grothendieck site $(\ct{C}, {\rm Cov})$, provided every covering sieve is inhabited. Our argument relies on \refprop{constsheaves}; that in turn relies on two lemmas.

\begin{lemm}{compseparatedpresheaf}
Suppose $X$ is a presheaf and $(S, x)$ and $(T, y)$ are two compatible families on $c \in \ct{C}_0$. Then they agree on a common refinement iff for every $(\gamma: d \to c) \in S \cap T$ there is a sieve $R$ covering $d$ such that for every $\delta \in R$ we have $x_{\gamma\delta} = y_{\gamma\delta}$.
\end{lemm}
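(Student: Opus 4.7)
The plan is to unpack ``agree on a common refinement'' as the existence of a covering sieve $U \subseteq S \cap T$ on $c$ with $x_\gamma = y_\gamma$ for all $\gamma \in U$, and then prove the two implications separately using only stability of covers under pullback and local character (transitivity) of the topology.

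For the direction ($\Rightarrow$), suppose such a $U$ exists. Given $(\gamma:d\to c)\in S\cap T$, set $R = \gamma^*U = \{\delta : \gamma\delta \in U\}$. Stability of $\mathrm{Cov}$ under pullback gives that $R$ covers $d$, and by construction $\gamma\delta \in U$ for every $\delta\in R$, so $x_{\gamma\delta}=y_{\gamma\delta}$ by assumption on $U$. This step is entirely routine.

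For the direction ($\Leftarrow$), define
\[ V \;=\; \{\,\eta\in S\cap T \,:\, x_\eta = y_\eta\,\}. \]
Compatibility of $(S,x)$ and $(T,y)$ makes $V$ closed under precomposition, hence a sieve on $c$; and clearly $V\subseteq S\cap T$, so it remains to show $V\in\mathrm{Cov}(c)$. I will do this by two applications of local character. First, $S\cap T$ is covering: for $\gamma\in S$, the sieve $\gamma^*S$ is maximal, so $\gamma^*(S\cap T)=\gamma^*T$, which covers $\mathrm{dom}(\gamma)$ by stability applied to the cover $T$; transitivity applied to $S$ then yields $S\cap T\in\mathrm{Cov}(c)$. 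Second, for each $\gamma\in S\cap T$, the hypothesis furnishes a cover $R_\gamma$ of $\mathrm{dom}(\gamma)$ with $x_{\gamma\delta}=y_{\gamma\delta}$ for all $\delta\in R_\gamma$; since $S\cap T$ is a sieve, $\gamma\delta\in S\cap T$, so $\gamma\delta\in V$ and $R_\gamma\subseteq \gamma^*V$, whence $\gamma^*V$ covers $\mathrm{dom}(\gamma)$. Applying transitivity to the cover $S\cap T$ and the sieve $V$ yields $V\in\mathrm{Cov}(c)$, as required.

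There is no real obstacle here; the only point needing attention is the double use of local character together with the bookkeeping that $S\cap T$ is automatically a cover. Once this is in place the equivalence drops out cleanly, and the proof will then be cited in \refprop{constsheaves} to handle compatible families for the presheaves relevant to sheafification.
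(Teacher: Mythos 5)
Your proof is correct and follows essentially the same route as the paper's: stability of covers under pullback for the forward direction, and a double application of local character for the converse. The only cosmetic difference is that you first isolate the fact that $S \cap T$ is a covering sieve and then apply local character to it and the subsieve $V$, whereas the paper inlines the two uses of local character by picking $\gamma_0 \in S$ and $\gamma_1 \in \gamma_0^{*}T$ directly.
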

\begin{proof}
$\Rightarrow$: Suppose $(S, x)$ and $(T, y)$ agree on a common refinement and $\gamma: d \to c$ belongs both to $S$ and $T$. We know that there is a sieve $U$ covering $c$ which refines $S$ and $T$ and on which $x$ and $y$ agree. Pulling back this sieve along $\gamma$ we find a sieve $R$ covering $d$. Then $x$ and $y$ agree on all elements of the from $\gamma\delta$ with $\delta \in R$.

$\Leftarrow$: Put
\[ U = \{ \alpha \in S \cap T \, : \, x_\alpha = y_\alpha \}. \]
We need to show that $U$ is covering. For this purpose, pick $\gamma_0: c_0 \to c \in S$ and $\gamma_1:d \to c_0 \in \gamma_0^* T$. Then $\gamma = \gamma_0\gamma_1 \in S \cap T$ and therefore there is a covering sieve $R$ on $d$ such that for every $\delta \in R$ we have $x_{\gamma\delta} = y_{\gamma\delta}$. In particular, $\gamma^*U$ covers $d$. But then $\gamma_0^* U$ covers $c_0$ and $U$ covers $c$, both times by local character.
\end{proof}

\begin{lemm}{eqclsmall}
Suppose $(\ct{C}, {\rm Cov})$ is an internal Grothendieck site in $\ct{E}$ in which both $\ct{C}_0$ and every covering sieve are inhabited. Let $X$ be an object in \ct{E} and
\[ \pi_!(X \times \ct{C}_0, \pi_2) \]
be the constant presheaf over $X$. Then equivalence classes of compatible families over this constant presheaf which agree on a common refinement are small.
\end{lemm}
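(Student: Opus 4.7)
The plan is to bound each equivalence class of compatible families on the constant presheaf at $c$ by a small set, after which bounded separation will give the result.

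First, I would unpack what a compatible family on $\pi_!(X \times \ct{C}_0, \pi_2)$ at $c$ looks like: it is a pair $(S, x)$ with $S$ a covering sieve on $c$ (a subobject of $\ct{C}_1/c$, which is a small set since ${\rm cod}$ is small) together with an assignment sending each $\alpha: e \to c$ in $S$ to a pair $x_\alpha = (y_\alpha, \delta_\alpha)$ with $y_\alpha \in X$ and $\delta_\alpha \in \ct{C}_1$ having domain $e$. The compatibility conditions read $y_{\alpha\beta} = y_\alpha$ and $\delta_{\alpha\beta} = \delta_\alpha \beta$, so under restriction the $X$-value is locally constant while the $\ct{C}_1$-value simply composes on the right. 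By the preceding Lemma, $(S, x) \sim (T, y)$ iff for each $\alpha \in S \cap T$ there is a covering sieve $R$ on $\mathrm{dom}(\alpha)$ on which $x_{\alpha\delta} = y_{\alpha\delta}$ for every $\delta \in R$.

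Next I would fix a representative $(S_0, x_0)$ of the equivalence class $E$ and exhibit $E$ as a subobject of a small set. Any $(T, y) \in E$ has $S_0 \cap T$ covering, and on a covering refinement $R \subseteq S_0 \cap T$ the data $(y_\alpha, \delta_\alpha)$ is pinned down by $x_0$. The compatibility condition then propagates these values: whenever $\alpha \in T$ admits a covering sieve $R'$ on $\mathrm{dom}(\alpha)$ with $\alpha \cdot R' \subseteq R$, the values $y_\alpha$ and $\delta_\alpha$ are forced by $x_0$, since the $y$-component is invariant under restriction and the $\delta$-component is determined on the sieve generated by any $\beta \in R'$ by its value at the image point. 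Using the presentation of the site, I can replace an arbitrary $T$ by a refinement whose sieve belongs to the small generating family, so every equivalence class has representatives of this restricted form.

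The main obstacle is controlling the $\delta$-component, whose values a priori range over fibres of $\mathrm{dom}: \ct{C}_1 \to \ct{C}_0$, which are not assumed small. I would handle this by arguing that, modulo equivalence, the $\delta$-data on a representative is determined by the values of $x_0$ on the refinement together with the composition rule $\delta_{\alpha\beta} = \delta_\alpha\beta$, so that the only genuinely free information lies in the $y$-component taking values in the small object $X$ over a small index set (namely a sieve from the presentation). Applying the fullness axiom {\bf (F)} to bound the collection of such compatible assignments by a small mv-object, and then bounded separation to cut out those assignments lying in the equivalence class, gives that the equivalence class is small, as required.
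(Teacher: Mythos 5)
Your overall outline --- use the characterization of equivalence from the preceding lemma together with inhabitedness of covers to pin down the data of any family equivalent to a fixed representative $(S_0,x_0)$, and then conclude smallness by bounded separation --- is the right one, and your unpacking of elements of the constant presheaf as pairs $(y_\alpha,\delta_\alpha)$ is accurate. But the step where you dispose of the $\delta$-component has a genuine gap. You argue that $\delta_\alpha$ is ``determined by the values of $x_0$ on the refinement together with the composition rule $\delta_{\alpha\beta}=\delta_\alpha\beta$''. What the equivalence actually gives you is the value of $\delta_\alpha\beta$ for $\beta$ ranging over some covering sieve on ${\rm dom}(\alpha)$, and knowing $\delta_\alpha\beta$ for all such $\beta$ determines $\delta_\alpha$ only if those $\beta$ are jointly epimorphic in $\ct{C}$, which is nowhere assumed. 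So your propagation argument recovers the \emph{restrictions} of $\delta_\alpha$, not $\delta_\alpha$ itself, and the injection of the equivalence class into the set of $y$-components (to which you then apply fullness) is not established; since that injection is what your final bound rests on, the proof does not go through as written.

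The paper closes exactly this hole by a uniqueness argument rather than a reconstruction. If $(T,y)$ and $(T,y')$ both lie in the class of $(S_0,x_0)$, they are equivalent to each other, and the $S=T$ instance of the first paragraph of the proof (where $S\cap T=T$, using inhabitedness of covers and separatedness of the constant presheaf) forces $y_\alpha=y'_\alpha$ for \emph{every} $\alpha\in T$, both components at once. Hence the entire family, not just its $X$-part, is a function of the sieve $T$ alone, and the equivalence class is in bijective correspondence with $\{\,T\in{\rm BCov}(c)\,:\,\dots\,\}$ for an explicitly bounded condition --- a subset of the small object ${\rm BCov}(c)$, so bounded separation finishes the argument. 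This also shows that your appeal to the fullness axiom is unnecessary: once one sees that $T$ already determines everything, no bound on $X$-valued assignments over a sieve is needed.
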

\begin{proof}
Note that if $(S, x)$ is one compatible family from $ \pi_!(X \times \ct{C}_0, \pi_2) $ over $c$ and $(T, y)$ is another, then they agree on a common refinement iff we have  $x_{\alpha} = y_{\alpha}$ for every $\alpha \in S \cap T$. This is an immediate consequence of the previous proposition and the fact that every cover is inhabited.

If $(S, x)$ is a compatible family on $c \in \ct{C}_0$, then the collection of compatible families which agree with it on a common refinement is in bijective correspondence with the set
\begin{eqnarray*}
\{ \, T \in {\rm BCov}(c) &  : & (\forall \gamma:d \to c \in T) \, (\forall \delta: e \to d, \delta': e' \to d) \\
& &  \big ( \, \gamma\delta \in S \land \gamma\delta' \in S \to x_{\gamma\delta} = x_{\gamma\delta'} \, \big) \, \}.
\end{eqnarray*}
For if $T$ belongs to a compatible family $(T, y)$ which agrees with $(S, x)$ on a common refinement, and we have maps $\gamma, \delta, \delta'$ in \ct{C} with  $\gamma \in T$, $\gamma\delta \in S$ and $\gamma\delta' \in S$, then
\[ x_{\gamma\delta} = y_{\gamma\delta} = y_{\gamma} = y_{\gamma\delta'} = x_{\gamma\delta'} \]
by the the remark we made at the end of the previous paragraph.

Conversely, if $T$ is a basic covering sieve with this property and $\gamma \in T$, then we can pull back $S$ along $\gamma$; this yields a covering sieve and since covering sieves are inhabited, this means there is a $\delta$ such that $\gamma \delta \in T$. So we may put
\[ y_{\gamma} := x_{\gamma\delta}, \]
which does not depend on the choice of $\delta$ by the assumption on $T$. This yields a compatible family $(T, y)$ which is equivalent to $(S, x)$. Moreover, this construction is clearly inverse to the operation of dropping the $y$ from the $(T, y)$. So we conclude that equivalence classes of compatible families are small, because they are in bijective correspondence with the set above.
\end{proof}

\begin{prop}{constsheaves}
Suppose $(\ct{C}, {\rm Cov})$ is an internal Grothendieck site in $\ct{E}$ in which both $\ct{C}_0$ and every covering sieve are inhabited. Then the functor  $\Delta$ which sends every object $X$ to the sheafification of the constant presheaf over $X$ reflects smallness.
\end{prop}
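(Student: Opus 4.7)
The plan is to embed $X$ as a subobject of the sections $\Delta(X)(c)$ for some $c \in \ct{C}_0$, and to show that these sections form a small object in $\ct{E}$ whenever the sheaf $\Delta(X)$ is small. First I would use the inhabitedness of $\ct{C}_0$ to fix an object $c \in \ct{C}_0$.

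Next I would define a map $e \colon X \to \Delta(X)(c)$ by sending $x \in X$ to the equivalence class of the compatible family on the maximal sieve $M_c$ given by $(\beta \colon d \to c) \mapsto (x, c, \beta) \in \pi_!(X \times \ct{C}_0, \pi_2)(d)$; the compatibility is immediate from the definition of restriction in $\pi_!(X \times \ct{C}_0, \pi_2)$. To see that $e$ is monic, suppose $e(x) = e(x')$; then the two constant families at $x$ and $x'$ agree on a common refinement, and by \reflemm{compseparatedpresheaf} applied with $\gamma = \id_c$, there is a covering sieve $R$ on $c$ such that $(x, c, \delta) = (x', c, \delta)$ for every $\delta \in R$. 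Since covering sieves are inhabited by hypothesis, picking any $\delta \in R$ yields $x = x'$.

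The key step, and the one I expect to be the main obstacle, is establishing that $\Delta(X)(c)$ is small in $\ct{E}$. My strategy is to argue via the forgetful functor $\pi^* i_* \colon \shct{\ct{E}}{\ct{C}} \to \ct{E}/\ct{C}_0$, which sends a sheaf to its underlying indexed family of sections and which preserves smallness in the framework of \cite{bergmoerdijk10b}. Thus the hypothesis that $\Delta(X) \to 1$ is small in sheaves yields smallness of the induced map $\pi^* i_* \Delta(X) \to \ct{C}_0$ in $\ct{E}$, and the fibre of this map over $c$ is precisely $\Delta(X)(c)$, which is therefore small. Here \reflemm{eqclsmall} plays an auxiliary role, ensuring that the equivalence classes themselves are small, so that $\Delta(X)(c)$ can be meaningfully formed as a quotient in the first place. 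Pinning down this preservation of smallness in the exact axiomatic sense required is the delicate point; once it is done, $X$ is small as a subobject of the small object $\Delta(X)(c)$ via $e$, using that monos into small objects are small in a predicative category of small maps.
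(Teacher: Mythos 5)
Your construction of the mono $e: X \to \Delta(X)(c)$ and the appeal to \reflemm{compseparatedpresheaf} plus inhabitedness of covering sieves is fine, and the point you flag as delicate --- that small maps of sheaves are pointwise small, so that $\Delta(X)(c)$ is small in $\ct{E}$ whenever $\Delta(X)$ is a small sheaf --- is indeed available in the framework of \cite{bergmoerdijk10b} (the paper's own proof relies on the same fact implicitly). The genuine gap is your final step: in a \emph{predicative} category with small maps it is \emph{not} true that monos into small objects are small. That principle is exactly the axiom (M) of the algebraic set theory literature, which corresponds to full Separation and is deliberately not assumed in the {\bf CZF}-style setting of this paper (an arbitrary subclass of a set need not be a set). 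What you may use is bounded separation: a subobject of a small object is small provided it is a \emph{bounded} subobject. So you still owe an argument that the image of $e$ is bounded in $\Delta(X)(c)$, and the obvious description ``there exists $x \in X$ with $e(x) = \xi$'' is not evidently bounded, since $X$ is precisely the object whose smallness is at issue.

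This is also where \reflemm{eqclsmall} does its real work; it is not merely needed ``so that the quotient can be formed'' (quotients of bounded equivalence relations exist by bounded exactness regardless). The paper avoids the boundedness problem by staying one level up, with the object of compatible families over the constant presheaf: the quotient map onto $\Delta(X)$ has small fibres by \reflemm{eqclsmall}, so smallness of $\Delta(X)$ yields smallness of the object of compatible families by the composition and descent axioms; and inside \emph{that} object the constant families --- those whose sieve is the maximal one --- are cut out by a manifestly bounded formula, and $X$ is recovered from them by one more bounded separation. If you want to keep your formulation via $e$, you can repair it by expressing membership of $\xi \in \Delta(X)(c)$ in the image of $e$ as ``some member of the equivalence class $\xi$ is a constant family on the maximal sieve'': this quantifier is bounded precisely because \reflemm{eqclsmall} makes the class a set. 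Either way, \reflemm{eqclsmall} is essential to the smallness transfer, not an auxiliary remark.
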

\begin{proof}
Assume $(\ct{C}, {\rm Cov})$ is an internal Grothendieck site in $\ct{E}$ in which both $\ct{C}_0$ and every covering sieve are inhabited. From the latter assumption it follows that every constant presheaf is separated, so $\Delta(X)$ is obtained by quotienting the compatible families over the constant presheaf over $X$; since the equivalence classes are small by the previous lemma, this implies that the object of compatible families over the constant presheaf on $X$ is small whenever $\Delta(X)$ is. Since the constant families on some object $c \in \ct{C}_0$ (i.e., those $(S, x)$ for which $S$ is the maximal sieve on $c$) can be identified by a bounded formula, $X$ will then be small as well.
\end{proof}

\begin{theo}{Wtypesreflsheaves}
Suppose $(\ct{C}, {\rm Cov})$ is an internal Grothendieck site in $\ct{E}$ in which both $\ct{C}_0$ and every covering sieve are inhabited. If ${\bf (WS)}$ holds in sheaves over $(\ct{C}, {\rm Cov})$, then it also holds in $\ct{E}$.
\end{theo}
\begin{proof}
It is not hard to see that $\Delta$ preserves W-types in the sense that \[ W(\Delta(f)) = \Delta(W(f)). \] Therefore the statement follows from \refprop{constsheaves}.
\end{proof}

\begin{theo}{AMCreflectedsheaves}
Suppose $(\ct{C}, {\rm Cov})$ is an internal Grothendieck site in $\ct{E}$ in which both $\ct{C}_0$ and every covering sieve are inhabited. If ${\bf (AMC)}$ holds in sheaves over $(\ct{C}, {\rm Cov})$, then it also holds in $\ct{E}$.
\end{theo}
\begin{proof}
For once we reason internally. Assume $(\ct{C}, {\rm Cov})$ is a small site in which both $\ct{C}_0$ and every covering sieve are inhabited. Let $X$ be a small object in \ct{E}. Consider $\pi_!(X \times \ct{C}_0, \pi_2)$, the constant presheaf over $X$. This presheaf is separated and hence a dense subobject of its sheafification $
\Delta X := \rho_!(X \times \ct{C}_0, \pi_2)$. For an element $t \in \Delta X(c)$, we will write $t \in X$ if it belongs to this subobject.

We first apply ${\bf (AMC)}$ in the category of sheaves to $\Delta X$; concretely this means that there is diagram
\diag{Y \ar[r] \ar[d] & U \times \Delta X \ar[d] \ar[r] & \Delta X \ar[d]  \\
I \ar@{->>}[r] & U \ar@{->>}[r] & 1 }
in the category of sheaves in which the square on the left is a collection square. By replacing, if necessary, the category of sheaves over $(\ct{C}, {\rm Cov})$ by its slice over $U$ (which is also a category of sheaves), we may assume that $U = 1$. Therefore the diagram above reduces to
\diag{ Y \ar[d] \ar@{->>}[r]^F \ar[d]_G & \Delta X \ar[d] \\
I \ar@{->>}[r] & 1. }
For the moment, fix a pair $c \in \ct{C}_0$ and $i \in I(c)$. Write
\[ \widetilde{Y}_{c, i} = \{ (\alpha: d \to c, y \in Y_i(d))  \, : \,  G_d(y) = i \cdot \alpha, F_d(y) \in X \} \]
and let $F_{c, i}: \widetilde{Y}_{c, i} \to X$ be the map sending $(\alpha: d \to c, y)$ to $F_d(y)$. Since the square above is covering and every covering sieve is inhabited, this map is surjective.

We now use fullness in \ct{E} to find a small collection $\mathcal{A}_{c, i} \subseteq {\rm Pow}(\widetilde{Y}_{c, i})$ such that:
\begin{enumerate}
\item For every element $A \in \mathcal{A}_{c, i}$ the map $F_{c, i} \upharpoonright A: A \to X$ is still surjective.
\item For every small $B \subseteq \widetilde{Y}_{c, i}$, if $F_{c, i} \upharpoonright B: B \to X$ is surjective, then there is an element $A \in \mathcal{A}_{c, i}$ such that $A \subseteq B$.
\end{enumerate}
(Strictly speaking we also need to use the collection axiom to justify writing these small collections $\mathcal{A}_{c, i}$ as a function of $(c, i)$: see \reflemm{semichoicelemma} below.) We claim that
\[ \{ F_{c, i} \upharpoonright A: A \to X \, : \, c \in \ct{C}_0, i \in I(c),  A \in \mathcal{A}_{c, i} \} \]
is a set of surjections onto $X$ as in the statement of ${\bf (AMC)}$.

To see this, let $P \to X$ be an arbitrary surjection. The map $\Delta P \to \Delta X$ is still a surjection, but then in the category of sheaves. Therefore in the category of sheaves there exists a surjection $V \to 1$ and a map $\xi: V \to I$ fitting into a covering square
\diag{ V \times_I Y \ar[r]^(.6)H \ar[d] & \Delta P \ar@{->>}[r] &  \Delta X \ar[d] \\
V \ar@{->>}[rr] & & 1.}
As $V \to 1$ is epi and both $\ct{C}_0$ and every covering sieve are inhabited, we can find elements $c \in \ct{C}_0$ and $v \in V(c)$. Put $i = \xi(v) \in I(c)$ and
\[ B = \{ \, (\alpha: d \to c, y) \in \widetilde{Y}_{c, i} \, : \, H_d(v \cdot \alpha, y) \in P \, \}. \]
The proof will be finished once we show that $F_{c, i} \upharpoonright B: B \to X$ is surjective.

So let $x \in X$. Since $F_{c, i}$ is surjective and every cover is inhabited, this means that there is a pair $(\alpha: d \to c, y) \in \widetilde{Y}_{c, i}$ such that $F_d(y) = x$. Since $H_d(i \cdot \alpha, y) \in \Delta(P)(d)$ and $\pi_!(P \times \ct{C}_0, \pi_2)$ lies dense in $\Delta P$, we find $\beta: e \to d$ with
\[ H_e(i \cdot \alpha\beta, y \cdot \beta) \in P. \]
Therefore $(\alpha\beta, y \cdot \beta) \in B$ and $F_{c, i}(\alpha\beta, y \cdot \beta) = F_e(y \cdot \beta) = x$.
\end{proof}

\section{Relation of AMC to other axioms}

It will be the aim of this section to compare our version of {\bf (AMC)} to other axioms which have appeared in the literature, including the principle called the axiom of multiple choice in \cite{moerdijkpalmgren02} and its reformulation in \cite{rathjen06b}. Throughout this section, our metatheory will be {\bf CZF}.

Before we compare our axiom to the principles in \cite{moerdijkpalmgren02} and \cite{rathjen06b}, we need to make a definition.

\begin{defi}{collfam}
We will say a surjection $p: Y \to X$ \emph{refines} another surjection $q: Z \to X$ if there is a map $f: Y \to Z$ such that $qf = p$. An indexed family $(Y_i)_{i \in I}$ will be called a \emph{collection family} if each surjection $p: E \to Y_i$ is refined by one of the form $q: Y_{i'} \to Y_i$.
\end{defi}

Consider:
\begin{enumerate}
\item The axiom of multiple choice according to \cite{moerdijkpalmgren02}: for every set $X$ there is an inhabited collection family $(Y_i)_{i \in I}$ together with surjections $q_i: Y_i \to X$.
\item A strengthened version of the above: for every set $X$ there exist an inhabited collection family $(Y_i)_{i \in I}$ and surjections $q_i: Y_i \to X$ such that each surjection $p: E \to Y_i$ is refined by a map $q: Y_{i'} \to Y_i$ over $X$.
\item The axiom of multiple choice as reformulated in \cite{rathjen06b}: every set $X$ is a member of a collection family.
\end{enumerate}

\begin{prop}{allstrongversionsequiv}
These principles are all equivalent in {\bf CZF}.
\end{prop}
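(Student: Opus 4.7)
The plan is to prove a cycle of implications $(2) \Rightarrow (1) \Rightarrow (3) \Rightarrow (2)$. The implication $(2) \Rightarrow (1)$ is immediate: (2) strengthens (1) by insisting that the refining map be over $X$, so any witness to (2) is also a witness to (1).

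For $(1) \Rightarrow (3)$, I would apply (1) to $X$ to obtain an inhabited collection family $(Y_i)_{i \in I}$ equipped with surjections $q_i \colon Y_i \twoheadrightarrow X$, and then enlarge the family by adjoining $X$ itself: index by $I \sqcup \{\star\}$ with $Y_\star = X$. The family still satisfies the collection property at each old index $i \in I$, so the only case needing attention is a surjection $p \colon E \twoheadrightarrow X$. Picking any $i_0 \in I$ (which exists by inhabitedness), I would pull $p$ back along $q_{i_0}$ to get a surjection $\pi \colon E \times_X Y_{i_0} \twoheadrightarrow Y_{i_0}$, then apply the collection property of the original family at $Y_{i_0}$ to find $i' \in I$ and a refining map $m \colon Y_{i'} \to Y_{i_0}$; composing the corresponding lift $Y_{i'} \to E \times_X Y_{i_0}$ with the projection to $E$ yields $g \colon Y_{i'} \to E$ with $p\,g = q_{i_0}\,m$, so $p$ is refined by the surjection $q_{i_0}\,m \colon Y_{i'} \to X$ from the family member $Y_{i'}$.

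For $(3) \Rightarrow (2)$, suppose $X = Z_{j_0}$ is a member of a collection family $(Z_j)_{j \in J}$. I would index the desired family by
\[ I \;=\; \{\, (j, q) : j \in J,\ q \colon Z_j \twoheadrightarrow X \,\}, \]
which is a set in \textbf{CZF} via exponentiation and bounded separation, with $Y_{(j,q)} = Z_j$ and $q_{(j,q)} = q$; the pair $(j_0, \mathrm{id}_X)$ witnesses that $I$ is inhabited. Given a surjection $p \colon E \twoheadrightarrow Y_{(j,q)}$, the collection property of $(Z_j)_{j \in J}$ supplies $j'$ and a refining surjection $m \colon Z_{j'} \to Z_j$; then $i' := (j', q \circ m)$ lies in $I$, and by construction $q_{i'} = q_{(j,q)} \circ m$, so the map $m \colon Y_{i'} \to Y_{(j,q)}$ refines $p$ and is automatically over $X$.

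The arguments are essentially bookkeeping and there is no serious obstacle; the one point worth flagging is that in $(1) \Rightarrow (3)$ the witnessing surjection $q_{i_0}\,m \colon Y_{i'} \to X$ need not coincide with the originally given $q_{i'}$, but this causes no trouble because the collection family property is stated purely in terms of the existence of \emph{some} refining surjection between members of the family, and makes no reference to auxiliary data such as maps to $X$.
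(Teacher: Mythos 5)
Your proposal is correct and follows essentially the same route as the paper: the same cycle $(1)\Rightarrow(3)\Rightarrow(2)\Rightarrow(1)$, with $(1)\Rightarrow(3)$ by adjoining $X$ to the family and $(3)\Rightarrow(2)$ by indexing over all surjections $Z_j\twoheadrightarrow X$. You merely supply the pullback verification that the paper leaves implicit.
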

\begin{proof} (1) $\Rightarrow$ (3): simply add the set $X$ to the collection family.

(3) $\Rightarrow$ (2): if $(Z_k)_{k \in K}$ is a collection family containing $X$, then let $I$ be the collection of all surjections $Z_k \to X$.

(2) $\Rightarrow$ (1) is obvious.
\end{proof}

We will call any of these equivalent principles \emph{strong {\bf (AMC)}}. As the name suggests, it implies our present version of {\bf (AMC)}.

\begin{prop}{MPAMCstronger}
Strong {\bf (AMC)} implies {\bf (AMC)}.
\end{prop}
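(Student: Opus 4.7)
The plan is to exploit the equivalence just established in \refprop{allstrongversionsequiv}, specifically formulation (2), which strengthens strong {\bf (AMC)} so that the refinement in the collection family is forced to be a map \emph{over} $X$. Given a set $X$, I would apply this formulation to obtain an inhabited collection family $(Y_i)_{i \in I}$ together with surjections $q_i: Y_i \twoheadrightarrow X$ such that each surjection $p: E \twoheadrightarrow Y_i$ is refined by some surjection $q: Y_{i'} \twoheadrightarrow Y_i$ satisfying $q_i \circ q = q_{i'}$.

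I would then claim that the set $\{\, q_i: Y_i \twoheadrightarrow X \, : \, i \in I \,\}$ witnesses our present version of {\bf (AMC)} for $X$. Given an arbitrary surjection $p: Y \twoheadrightarrow X$, pick some $i_0 \in I$ (using that the family is inhabited) and form the pullback projection
\[ \pi_2 : Y \times_X Y_{i_0} \twoheadrightarrow Y_{i_0}, \]
which is a surjection because $p$ is. Applying the strengthened collection family property to $\pi_2$ produces an index $i' \in I$ together with a map $h: Y_{i'} \to Y \times_X Y_{i_0}$ such that $\pi_2 \circ h: Y_{i'} \twoheadrightarrow Y_{i_0}$ is a surjection \emph{over} $X$, so that $q_{i_0} \circ \pi_2 \circ h = q_{i'}$. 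Setting $f := \pi_1 \circ h : Y_{i'} \to Y$, a one-line chase using the pullback square yields
\[ p \circ f \; = \; p \circ \pi_1 \circ h \; = \; q_{i_0} \circ \pi_2 \circ h \; = \; q_{i'}, \]
which is precisely the factorisation demanded by {\bf (AMC)}.

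The only subtle point — and my reason for routing through formulation (2) rather than appealing to formulation (1) or (3) of strong {\bf (AMC)} directly — is that in the raw formulation the collection family property only guarantees that $\pi_2$ is refined by \emph{some} surjection $Y_{i'} \twoheadrightarrow Y_{i_0}$, with no a priori control over how the resulting composite $Y_{i'} \to X$ relates to the designated surjection $q_{i'}$. Incorporating the over-$X$ condition, which \refprop{allstrongversionsequiv} makes freely available, is exactly what is needed so that the witnessing factorisation lands on the nose on $q_{i'}$ rather than merely on some other surjection onto $X$.
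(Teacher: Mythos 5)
Your proof is correct and follows essentially the same route as the paper: both pass to formulation (2) of strong \textbf{(AMC)} (the over-$X$ refinement property), pull the given surjection back along some $q_{i_0}$ with $i_0$ chosen using inhabitedness, refine the pullback projection by some $Y_{i'} \twoheadrightarrow Y_{i_0}$ over $X$, and compose with the other projection to get the required factorisation $p \circ f = q_{i'}$. Your remark on why formulation (1) alone would not suffice matches the paper's implicit reliance on version (2).
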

\begin{proof}
Suppose $X$ is a set and $\{ p_i: Y_i \twoheadrightarrow X \, : \, i \in I \}$ is an inhabited set of surjections as in version 2 of strong {\bf (AMC)}. We claim that $\{ p_i \, : \, i \in I \}$   is also a set of surjections witnessing {\bf (AMC)} in the sense of this paper. To show this, let $f: Z \epi X$ be any surjection. Since $I$ is inhabited, we can pick an element $i \in I$ and construct the pullback:
\diag{T \ar@{->>}[r]^g \ar@{->>}[d] & Y_i \ar@{->>}[d]^{p_i} \\
Z \ar@{->>}[r]_f & X. }
Using the property of $\{ p_i \, : \, i \in I \}$, we find a $j \in J$ and a surjection $h: Y_j \epi Y_i$ factoring through $g$. Therefore $p_j = p_i \circ h$ factors through $f$.
\end{proof}

We expect the converse to be unprovable in {\bf CZF}. However, there is an axiom scheme suggested by Peter Aczel in \cite{aczel08} which implies that our present version of {\bf (AMC)} and strong {\bf (AMC)} are equivalent. This axiom scheme is:
\begin{description}
\item[The Relation Reflection Scheme (RRS):] Suppose $R, X$ are classes and $R \subseteq X \times X$ is a total relation. Then there is for every subset $x \subseteq X$ a subset $y \subseteq X$ with $x \subseteq y$ such that $(\forall a \in y) \, (\exists b \in y) \, (a, b) \in R$.
\end{description}
Our proof of this fact relies on the following lemma:
\begin{lemm}{semichoicelemma}
Suppose $\varphi(x, y)$ is a predicate such that
\[ \varphi(x, y) \land y \subseteq y' \rightarrow \varphi(x, y'). \]
Then, if
\[ (\forall x \in a) \, (\exists y) \, \varphi(x, y), \]
there is a function $f: a \to V$ such that $\varphi(x, f(x))$ for all $x \in a$.
\end{lemm}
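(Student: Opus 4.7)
The plan is to combine Strong Collection (available in \textbf{CZF}) with the monotonicity hypothesis to collapse all the needed witnesses into a single one, and then take $f$ to be the corresponding constant function.

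First, I would apply Strong Collection to the hypothesis $(\forall x \in a)(\exists y)\,\varphi(x,y)$. This yields a set $b$ such that
\[ (\forall x \in a)(\exists y \in b)\,\varphi(x,y). \]
Next, let $Y = \bigcup b$; this is a set by the Union axiom. For any $x \in a$, pick some $y \in b$ witnessing $\varphi(x,y)$. Since $y \subseteq \bigcup b = Y$, the monotonicity assumption $\varphi(x,y) \land y \subseteq y' \to \varphi(x,y')$ gives $\varphi(x,Y)$.

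Finally, define $f: a \to V$ to be the constant function with value $Y$ (formally, $f = \{(x,Y) : x \in a\}$, which exists as a set by Replacement). Then by construction $\varphi(x,f(x))$ holds for every $x \in a$, as required.

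The only point requiring care is that we are not asked to produce different values for different $x$; the monotonicity hypothesis is precisely what makes it legitimate to use the single set $Y$ as a uniform witness, sidestepping the need for an actual choice function. Thus there is no real obstacle: the lemma is essentially a direct consequence of Strong Collection together with the upward closure of $\varphi$ in its second argument.
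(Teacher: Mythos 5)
Your proof is correct and follows essentially the same route as the paper: apply (Strong) Collection to gather a set of witnesses and then use the union axiom together with the upward monotonicity of $\varphi$ in its second argument. The only cosmetic difference is that the paper takes the union fibrewise, setting $f(x) = \bigcup \{ z_1 : (x, z_1) \in b \}$ for a set $b$ of witnessing pairs, whereas you take a single global union $Y = \bigcup b$ and the constant function with value $Y$; both are licensed by the monotonicity hypothesis.
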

\begin{proof}
First use collection to find a set $b$ such that
\[ (\forall x \in a) \, (\exists z \in b) \, \big( \, z = (z_0, z_1) \land z_0 = x \land \varphi(z_0, z_1) \, \big). \]
Then put $f(x) = \bigcup \{ z_1  \, : \, (x, z_1) \in b \}$, which is a set by the union and replacement axioms.
\end{proof}

\begin{prop}{withRRSrealAMC}
Strong {\bf (AMC)} follows from {\bf (AMC)} and {\bf (RRS)}.
\end{prop}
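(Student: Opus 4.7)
My plan is to iterate the {\bf (AMC)} construction using {\bf (RRS)}. I will define a total class relation on the universe $V$ whose ``successor'' step closes a set of sets under passing to {\bf (AMC)}-witness domains; the union of an $R$-closed set supplied by {\bf (RRS)} is then shown to be a collection family containing the given $X$.

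Concretely, let $R \subseteq V \times V$ be the class relation defined by: $R(a, b)$ holds iff $a \subseteq b$ and for every $Y \in a$ and every surjection $p: E \epi Y$ there exist $Y' \in b$ and a surjection $q: Y' \epi Y$ refining $p$ (that is, factoring through $p$). To verify that $R$ is total, fix any $a$. For each $Y \in a$, {\bf (AMC)} produces a set $\sigma$ of surjections onto $Y$ through which every surjection onto $Y$ factors; the property ``$\sigma$ contains such a witness for $Y$'' is manifestly upward closed in $\sigma$, so \reflemm{semichoicelemma} supplies a function $Y \mapsto \sigma_Y$ on $a$. Setting $b := a \cup \bigcup_{Y \in a}\{\mathrm{dom}(p) : p \in \sigma_Y\}$, which is a set by replacement and union, one has $R(a, b)$.

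I then apply {\bf (RRS)} to $R$ with initial subset $x := \{\{X\}\} \subseteq V$, obtaining a set $y$ with $\{X\} \in y$ such that every $a \in y$ admits some $b \in y$ with $R(a, b)$. Put $F := \bigcup y$. Then $X \in F$ because $\{X\} \in y$, and whenever $Y \in F$ there is some $a \in y$ with $Y \in a$; the corresponding $b \in y$ satisfying $R(a, b)$ guarantees that every surjection onto $Y$ is refined by a surjection $Y' \epi Y$ with $Y' \in b \subseteq F$. Hence the family $(Y)_{Y \in F}$ is a collection family containing $X$, which by \refprop{allstrongversionsequiv} is exactly strong {\bf (AMC)}.

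The only subtle step I anticipate is totality of $R$: the task is to convert pointwise applications of {\bf (AMC)} into a uniform assignment $Y \mapsto \sigma_Y$, which is precisely what \reflemm{semichoicelemma} delivers from the upward closure of the witness property. Once this is in hand, the {\bf (RRS)} application and the verification that $F$ is a collection family are routine unpackings of the definitions.
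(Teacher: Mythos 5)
Your proof is correct and follows essentially the same route as the paper's: apply \textbf{(RRS)} to a total relation whose ``successor'' step performs one round of \textbf{(AMC)}, using \reflemm{semichoicelemma} to uniformise the pointwise choices of witness sets. The only (inessential) difference is that you work with sets of sets and land on formulation (3) of strong \textbf{(AMC)}, whereas the paper works with subsets of ${\rm Surj}(X)$ and lands on formulation (2); these are interchangeable by \refprop{allstrongversionsequiv}.
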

\begin{proof}
Fix a set $X$. We define a relation $R \subseteq {\rm Pow}({\rm Surj}(X)) \times {\rm Pow}({\rm Surj}(X))$ by putting
\begin{quote}
$(\alpha, \beta) \in R$ iff for every $f: Y \to X \in \alpha$ and every surjection $g: Z \to Y$ there are $h: T \to X \in \beta$, $p: T \epi Y$ and $k: T \to Z$ fitting into a commutative diagram as follows:
\diag{ T \ar[d]_k \ar[r]^{h} \ar@{>>}[rd]^p & X \\
Z \ar@{->>}[r]_g & Y. \ar[u]_{f} }
\end{quote}
It follows from {\bf (AMC)} that $R$ is total: for if $\alpha$ is any set of surjections onto $X$, then {\bf (AMC)} implies that for every $f: Y \to X \in \alpha$ there is a set of surjections onto $Y$ such that any such is refined by one in this set. By applying the previous lemma to this statement, we find for every $f \in \alpha$ a set $A_f$ of surjections with this property. We find our desired $\beta$ as $\beta = \{f \circ g \, : \, g \in A_f \}$.

By applying {\bf (RRS)} to $R$, we obtain a set $M \subseteq {\rm Pow}({\rm Surj}(X))$ such that $\{ \id_X: X \to X \} \in M$ and $(\forall \alpha \in M) \, (\exists \beta \in M) \, (\alpha, \beta) \in R$. Put $N = \bigcup M$. It is straightforward to check that $N$ is a set of surjections witnessing strong {\bf (AMC)}.
\end{proof}

Note that the following was shown in \cite{moerdijkpalmgren02}:

\begin{theo}{realAMCwithWSimpliesREA} {\rm \cite[Theorem 7.1(ii)]{moerdijkpalmgren02}}
The regular extension axiom {\bf (REA)} follows from the combination of strong {\bf (AMC)} and {\bf (WS)}.
\end{theo}

\noindent
We expect this theorem to fail if one replaces strong {\bf (AMC)} with our present version of {\bf (AMC)}. (In fact, this is the only application of strong {\bf (AMC)} we are aware of that probably cannot be proved using our weaker version.) We do not consider this a serious drawback of our present version of {\bf (AMC)} or our proposal to extend {\bf CZF} with {\bf (WS)} and this axiom, because the main (and, so far, only) application of {\bf (REA)} is the Set Compactness Theorem, which, as we showed in Section 2, \emph{is} provable using {\bf (WS)} and the present version of {\bf (AMC)}.

\bibliographystyle{plain} \bibliography{ast}

\end{document}